\numberwithin{figure}{section}
\theoremstyle{plain}
\newtheorem{theorem}{Theorem}[section]
\newtheorem{lemma}{Lemma}[section]
\newtheorem{corollary}{Corollary}[section]
\newtheorem{condition}{Condition}[section]
\theoremstyle{definition}
\newtheorem{remark}{Remark}[section]
\numberwithin{equation}{section}
\renewcommand{\Im}{{\rm Im\,}}
\newcommand{\loc}{{\rm loc}}
\newcommand{\ind}{{\rm ind\,}}
\newcommand{\supp}{{\rm supp\,}}
\newcommand{\Dom}{{\rm D}}
\renewcommand{\ker}{{\rm ker\,}}
\renewcommand{\dim}{{\rm dim\,}}
\newcommand{\const}{{\rm const}}
\newcommand{\dist}{{\rm dist}}
\renewcommand{\phi}{{\varphi}}
\newcommand{\n}{{|\!|\!|}}
\newcommand{\cK}{{\mathcal K}}
\newcommand{\cM}{{\mathcal M}}
\newcommand{\cL}{{\mathcal L}}
\newcommand{\cB}{{\mathcal B}}
\newcommand{\cG}{{\mathcal G}}
\newcommand{\cO}{{\mathcal O}}
\newcommand{\cC}{{\mathcal C}}
\newcommand{\cH}{{\mathcal H}}
\newcommand{\cX}{{\mathcal X}}
\newcommand{\pG}{{\partial G}}
\newcommand{\oG}{{\overline G}}
\newcommand{\bP}{{\mathbf P}}
\newcommand{\bB}{{\mathbf B}}
\newcommand{\bL}{{\mathbf L}}
\newcommand{\bI}{{\mathbf I}}
\newcommand{\bT}{{\mathbf T}}
\newcommand{\bbR}{{\mathbb R}}
\title{On the existence of a Feller semigroup with atomic measure in nonlocal boundary condition}
\author{Pavel Gurevich}
\date{}
\begin{document}

\maketitle


\begin{abstract}
The existence of Feller semigroups arising in the theory of multidimensional diffusion
processes is studied. An elliptic operator of second order is considered on a plane
bounded region~$G$. Its domain of definition consists of continuous functions satisfying
a nonlocal condition  on the boundary of the region. In general, the nonlocal term is an
integral of a function over the closure of the region $G$ with respect to a nonnegative
Borel measure $\mu(y,d\eta)$, $y\in\pG$. It is proved that the operator is a generator of
a Feller semigroup in the case where the measure is atomic. The smallness of the measure
is not assumed.
\end{abstract}

\section{Introduction and Preliminaries}

It was shown in~\cite{Feller1,Feller2} that any one-dimensional diffusion process is
related to a strongly continuous contractive nonnegative semigroup  (the {\it Feller
semigroup\/})  of operators acting on the space of continuous functions. Moreover, a
general form of the generator of this semigroup was obtain and all possible boundary
conditions defining its domain were described.

In the multidimensional case, a general form of the generator of a Feller semigroup was
obtained in~\cite{Ventsel}. It was proved that the generator of a Feller semigroup is an
elliptic differential operator of second order (perhaps, degenerated) whose domain of
definition consists of continuous functions satisfying a nonlocal boundary condition. The
nonlocal term is given by the integral of a function over the closure of a region $G$
with respect to a nonnegative Borel measure  $\mu(y,d\eta)$, $y\in\pG$.

The following problem is unsolved. Given an elliptic differential operator of second
order whose domain is defined by a general nonlocal condition (see~\cite{Ventsel}),
whether or not its closure is a generator of a Feller semigroup?

One  distinguishes the {\it transversal} and {\it nontransversal} nonlocal conditions. In
the transversal case, the order of nonlocal terms is less than the order of the local
terms, whereas these orders coincide in the nontransversal case. The transversal case was
considered in~\cite{SU,BCP,Watanabe,Taira1,Taira3,Ishikawa}. A method of the study of the
more difficult nontransversal case was developed in
papers~\cite{SkDAN89,SkRJMP95,GalSkMs,GalSkJDE}. These  works are devoted to obtaining
sufficient conditions on the coefficients and the Borel measure (in the nonlocal
condition) that ensure the existence of a Feller semigroup.

In~\cite{GalSkMs,GalSkJDE}, the authors study  the case where the measure  $\mu(y,\oG)$
(after some normalization) is less than one. In this paper, we investigate  {\it
nontransversal\/} nonlocal conditions given on the boundary of a plane bounded domain
$G$, admitting the ``limit case'' in which the measure $\mu(y,\oG)$ may equal one (it
cannot be greater than one~\cite{Ventsel}). We consider a model case where the measure
$\mu(y,d\eta)$ is atomic and vanishes for $y$ from outside of some
$\varepsilon$-neighborhood of a set $\cK\subset\pG$ consisting of finitely many points.

By using theorems on the solvability of elliptic equations with nonlocal boundary
conditions in the Kondrat'ev weighted spaces~\cite{GurSkub}, asymptotics of solutions
near the conjugation points~\cite{GurPetr03} (the points of the set $\cK$), and the
maximum principle, we investigate the solvability of nonlocal problems in the spaces of
continuous functions (see
Secs.~\ref{subsectStatement}--\ref{subsectNonlocalProblemsInC}). Applying these results
and the Hille--Iosida theorem, we prove in Sec.~\ref{secFeller} that an elliptic operator
with the above nonlocal boundary conditions is a generator of a Feller semigroup.

In the conclusion of this section, we remind the notion of a Feller semigroup and its
generator and formulate a version of the Hille--Iosida theorem adapted for our purposes.

\bigskip

Let $G\subset\bbR^2$ be a bounded domain with piecewise smooth boundary $\pG$, and let
$X$ be a closed subspace in $C(\oG)$ containing at least one nontrivial nonnegative
function.

A strongly continuous semigroup of operators $\bT_t:X\to X$ is called a {\it Feller
semigroup  on} $X$ if it satisfies the following conditions: 1. $\|\bT_t\|\le 1$,
$t\ge0$; 2.  $\bT_t u\ge0$ for all $t\ge0$ and $u\in X$, $u\ge0$.

A linear operator $\bP:\Dom(\bP)\subset X\to X$ is called the ({\it infinitesimal\/})
{\it generator} of a strongly continuous semigroup $\{\bT_t\}$ if $$ \bP
u=\lim\limits_{t\to +0}{(\bT u-u)}/{t},\quad  \Dom(\bP)=\{u\in X:  \text{the limit exists
in } X\}. $$

\begin{theorem}[the Hille--Iosida theorem, see Theorem~9.3.1 in~\cite{Taira1}]\label{thHI}
\begin{enumerate}
\item
Let  $\bP:\Dom(\bP)\subset X\to X$  be a generator of a Feller semigroup on $X$. Then the
following assertions are true.
\begin{enumerate}
\item[$(a)$]
The domain $\Dom(\bP)$ is dense in $X$.
\item[$(b)$]
For each $q>0$ the operator $q\bI-\bP$ has the bounded inverse $(q\bI-\bP)^{-1}:X\to X$
and $\|(q\bI-\bP)^{-1}\|\le 1/q$.
\item[$(c)$]
The operator $(q\bI-\bP)^{-1}:X\to X$, $q>0$, is nonnegative.
\end{enumerate}
\item
Conversely, if $\bP$ is a linear operator from $X$ to $X$ satisfying condition $(a)$ and
there is a constant $q_1\ge 0$ such that conditions $(b)$ and $(c)$ hold for $q>q_1$,
then $\bP$ is the generator of a certain Feller semigroup  on $X$, which is uniquely
determined by $\bP$.
\end{enumerate}
\end{theorem}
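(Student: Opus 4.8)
The plan is to reduce this to the classical Hille--Yosida construction while tracking positivity at every stage; I treat the two implications separately. For the necessity direction (assertion~1), I would start from the given Feller semigroup $\{\bT_t\}$ with generator $\bP$ and produce the resolvent explicitly. For density~$(a)$, given $u\in X$ I would set $u_t=\tfrac1t\int_0^t\bT_s u\,ds$; strong continuity gives $u_t\to u$ as $t\to+0$, while a direct computation of $(\bT_h u_t-u_t)/h$ shows $u_t\in\Dom(\bP)$, so $\Dom(\bP)$ is dense. For $(b)$ and $(c)$ I would define, for $q>0$,
\[
R_q u=\int_0^\infty e^{-qt}\bT_t u\,dt,
\]
which converges because $\|\bT_t\|\le1$. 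The contraction bound yields $\|R_q u\|\le\|u\|/q$, and a direct computation using the semigroup law shows $(q\bI-\bP)R_q=\bI$ on $X$ and $R_q(q\bI-\bP)=\bI$ on $\Dom(\bP)$, so $R_q=(q\bI-\bP)^{-1}$ with $\|(q\bI-\bP)^{-1}\|\le1/q$. Nonnegativity~$(c)$ is then immediate: $\bT_t u\ge0$ and $e^{-qt}\ge0$ force $R_q u\ge0$ whenever $u\ge0$.

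The converse (assertion~2) is the substantive part, and I would carry it out via the Yosida approximation. Writing $R_q=(q\bI-\bP)^{-1}$ for $q>q_1$, I would first establish $qR_q u\to u$ as $q\to\infty$ for every $u\in X$: for $u\in\Dom(\bP)$ one has $qR_q u-u=R_q\bP u$, which is $O(1/q)$ by $(b)$, and the general case follows by density together with the uniform bound $\|qR_q\|\le1$. Then I would introduce the bounded operators $\bP_q=q^2R_q-q\bI=q\bP R_q$ and verify $\bP_q u\to\bP u$ for $u\in\Dom(\bP)$. The approximating semigroups are defined by
\[
\bT_t^{(q)}=e^{t\bP_q}=e^{-qt}\sum_{n=0}^\infty\frac{(tq^2)^n}{n!}R_q^n.
\]
Here both required properties fall out of the series: since $(c)$ gives $R_q\ge0$, every term is nonnegative, whence $\bT_t^{(q)}\ge0$; and the estimate $\|R_q^n\|\le q^{-n}$ from $(b)$ yields $\|\bT_t^{(q)}\|\le e^{-qt}e^{tq}=1$.

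It remains to pass to the limit $q\to\infty$ and to identify the generator, which I expect to be the main obstacle. Because the resolvents $R_p,R_q$ commute, the operators $\bT^{(p)},\bT^{(q)},\bP_p,\bP_q$ all commute, and the telescoping identity
\[
\bT_t^{(q)}u-\bT_t^{(p)}u=\int_0^t\bT_{t-s}^{(p)}\bT_s^{(q)}(\bP_q-\bP_p)u\,ds
\]
combined with $\|\bT^{(p)}_{t-s}\bT^{(q)}_s\|\le1$ gives $\|\bT_t^{(q)}u-\bT_t^{(p)}u\|\le t\,\|(\bP_q-\bP_p)u\|$. For $u\in\Dom(\bP)$ the right-hand side tends to $0$ uniformly on compact $t$-intervals, so $\bT_t u:=\lim_{q\to\infty}\bT_t^{(q)}u$ exists; density and the uniform bound $\|\bT^{(q)}_t\|\le1$ extend the limit to all of $X$. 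The semigroup law, strong continuity, and the contraction bound $\|\bT_t\|\le1$ all pass to the limit, and nonnegativity survives because a sup-norm limit of nonnegative continuous functions is nonnegative; thus $\{\bT_t\}$ is a Feller semigroup on $X$.

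Finally I would show that its generator is exactly $\bP$: differentiating $\bT_t^{(q)}u=\bT_t^{(q)}\bP_q u$ and passing to the limit shows that the generator of $\{\bT_t\}$ extends $\bP$, and since $q\bI-\bP$ is already onto $X$ for $q>q_1$ while $q\bI-(\text{generator})$ is injective by part~1$(b)$, the two domains must coincide. Uniqueness follows because the Laplace transform of any Feller semigroup is its resolvent, so $\bP$ determines $\{\bT_t\}$. The delicate point throughout is that no smallness or extra regularity is available: one must extract positivity of the limit semigroup \emph{solely} from condition~$(c)$, through the nonnegativity of the Neumann-type series for $\bT_t^{(q)}$, and must invoke the surjectivity in $(b)$ to rule out a proper extension of $\bP$ as the true generator.
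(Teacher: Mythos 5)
This statement is not proved in the paper at all: it is quoted as a known result (Theorem~9.3.1 in Taira's book, reference~[Taira1]) and used as a black box, so there is no in-paper proof to compare against. Your proposal is a correct outline of the classical argument that the cited source itself uses --- Laplace-transform resolvent $R_q u=\int_0^\infty e^{-qt}\bT_t u\,dt$ for the necessity part, and the Yosida approximation $\bP_q=q^2R_q-q\bI$ with positivity extracted term by term from the exponential series $e^{-qt}\sum_n (tq^2)^n R_q^n/n!$ for the converse, followed by the standard surjectivity-versus-injectivity argument to rule out a proper extension of $\bP$. The only caveat worth noting is that several steps (convergence of the semigroup law and strong continuity to the limit, and the passage $\bT_s^{(q)}\bP_q u\to\bT_s\bP u$ inside the integral when identifying the generator) are asserted rather than carried out, but each is routine given the uniform bound $\|\bT_t^{(q)}\|\le 1$ and convergence uniform on compact $t$-intervals, so there is no genuine gap.
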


\section{Setting of Nonlocal Problems}\label{subsectStatement}

Let $G\subset{\mathbb R}^2$ be a bounded domain with boundary
$\partial G$. Consider a set ${\cK}\subset\partial G$ consisting
of finitely many points. Let $\partial G\setminus{\mathcal
K}=\bigcup\limits_{i=1}^{N}\Gamma_i$, where $\Gamma_i$ are open
(in the topology of $\partial G$) $C^\infty$ curves. Assume that
the domain $G$ is a plane angle in some neighborhood of each point
$g\in{\mathcal K}$.

For an integer $k\ge0$, denote by $W_2^k(G)$ the Sobolev space. By $W^k_{2,\loc}(G)$ we
denote the set of functions  $u$ such that  $u\in W_2^k(G')$ for any $G'$,
$\overline{G'}\subset G$.

If $\cX$ is a domain in $\mathbb R^2$, we denote by $C_0^\infty(\cX)$ the set of
functions infinitely differentiable on $\overline{ \cX}$ and compactly supported on
$\cX$. If $\cM\subset\cX$, we denote by $C_0^\infty(\overline \cX\setminus \cM)$ the set
of functions infinitely differentiable on $\overline{ \cX}$ and compactly supported on
$\overline \cX\setminus \cM$.

Along with Sobolev spaces, we will use weighted spaces (the Kondrat'ev spaces). Let
$Q=\{y\in{\mathbb R}^2:  r>0,\ |\omega|<\omega_0\}$, $Q=\{y\in{\mathbb R}^2:  0<r<d,\
|\omega|<\omega_0\}$, $0<\omega_0<\pi$, $d>0$, or $Q=G$. We denote by $\mathcal M$ the
set $\{0\}$ in the first and second cases and the set $\mathcal K$ in the third case.
Introduce the space $H_a^k(Q)=H_a^k(Q,\mathcal M)$ as the completion of the set
$C_0^\infty(\overline{ Q}\setminus \mathcal M)$ with respect to the norm
$$
 \|u\|_{H_a^k(Q)}=\Bigg(
    \sum_{|\alpha|\le k}\,\int\limits_Q \rho^{2(a+|\alpha|-k)} |D^\alpha u(y)|^2 dy
                                       \Bigg)^{1/2},
$$
where $a\in \mathbb R$, $k\ge 0$ is an integer, and
$\rho=\rho(y)=\dist(y,\mathcal M)$. For an integer $k\ge1$, denote
by $H_a^{k-1/2}(\Gamma)=H_a^{k-1/2}(\Gamma,\cM)$  the set of
traces on a smooth curve $\Gamma\subset\overline{ Q}$ (with the
 infimum-norm).

Let $p_{jk},p_j\in C^\infty(\bbR^2)$ be real-valued functions, and
let $p_{jk}=p_{kj}$, $j,k=1,2$. Consider the differential operator
\begin{equation}\label{eq1}
Pu=\sum\limits_{j,k=1}^{2}p_{jk}(y)u_{y_jy_k}(y)+
\sum\limits_{j=1}^2p_j(y)u_{y_j}(y)+p_0(y)u(y).
\end{equation}

\begin{condition}\label{cond1.1}
1. There is a constant $c_0>0$ such that $\sum\limits_{j,k=1}^{2}p_{jk}(y)\xi_j\xi_k\ge
c_0|\xi|^2$ for $y\in\overline{G}$ and $\xi=(\xi_1,\xi_2)\in\bbR^2.$

2. $p_0(y)\le0$ for $y\in\overline{G}$.
\end{condition}

Introduce the operators corresponding to nonlocal terms supported near the set $\mathcal
K$. For any  set $\mathcal M$, we denote its $\varepsilon$-neighborhood by $\mathcal
O_{\varepsilon}(\mathcal M)$. Let $\Omega_{is}$ ($i=1, \dots, N;$ $s=1, \dots, S_i$) be
$C^\infty$-diffeomorphisms taking some neighborhood ${\mathcal O}_i$ of the curve
$\overline{\Gamma_i\cap\mathcal O_{{\varepsilon}}(\mathcal K)}$ to the set
$\Omega_{is}({\mathcal O}_i)$ in such a way that $\Omega_{is}(\Gamma_i\cap\mathcal
O_{{\varepsilon}}(\mathcal K))\subset G$ and $ \Omega_{is}(g)\in\mathcal K\ \text{for}\
g\in\overline{\Gamma_i}\cap\mathcal K. $ Thus, the transformations $\Omega_{is}$ take the
curves $\Gamma_i\cap\mathcal O_{{\varepsilon}}(\mathcal K)$   inside the domain $G$ and
the set of their end points $\overline{\Gamma_i}\cap\mathcal K$ to itself.

 Denote by $\Omega_{is}^{+1}$ the
transformation $\Omega_{is}:{\mathcal O}_i\to\Omega_{is}({\mathcal
O}_i)$ and by $\Omega_{is}^{-1}:\Omega_{is}({\mathcal
O}_i)\to{\mathcal O}_i$ the inverse transformation. The set of
points
$\Omega_{i_qs_q}^{\pm1}(\dots\Omega_{i_1s_1}^{\pm1}(g))\in{\mathcal
K}$ ($1\le s_j\le S_{i_j},\ j=1, \dots, q$) is said to be an {\em
orbit} of the point $g\in{\mathcal K}$. In other words, the orbit
of a point $g\in\cK$ is formed by the points (of the set $\mathcal
K$) that can be obtained by consecutively applying the
transformations $\Omega_{i_js_j}^{\pm1}$ to the point~$g$.

The set $\mathcal K$ consists of finitely many disjoint orbits,
which we denote by $\mathcal K_\nu$, $\nu=1,\dots,N_0$. Fix an
arbitrary orbit $\mathcal K_\nu$ and assume that it consists of
points\footnote{The points $g_j$ and other objects (see
below) related to the orbit $\mathcal K_\nu$ depend on $\nu$. To
avoid clumsy notation, we do not explicitly indicate this
dependence.} $g_j$, $j=1, \dots,
N_\nu$.

Take a sufficiently small number $\varepsilon>0$ such that there
exist neighborhoods $\mathcal O_{\varepsilon_1}(g_j)$, $ \mathcal
O_{\varepsilon_1}(g_j)\supset\mathcal O_{\varepsilon}(g_j) $,
satisfying the following conditions: (1) the domain $G$ is a plane
angle in the neighborhood $\mathcal O_{\varepsilon_1}(g_j)$;   (2)
$\overline{\mathcal O_{\varepsilon_1}(g)}\cap\overline{\mathcal
O_{\varepsilon_1}(h)}=\varnothing$ for any $g,h\in\mathcal K$,
$g\ne h$; (3) if $g_j\in\overline{\Gamma_i}$ and
$\Omega_{is}(g_j)=g_k,$ then ${\mathcal
O}_{\varepsilon}(g_j)\subset\mathcal
 O_i$ and
 $\Omega_{is}\big({\mathcal
O}_{\varepsilon}(g_j)\big)\subset{\mathcal
O}_{\varepsilon_1}(g_k).$

For each point $g_j\in\overline{\Gamma_i}\cap\mathcal K_\nu$, we
fix a linear transformation $Y_j: y\mapsto y'(g_j)$ (the
composition of the shift by the vector $-\overrightarrow{Og_j}$
and rotation) mapping the point $g_j$ to the origin in such a way
that $ Y_j({\mathcal O}_{\varepsilon_1}(g_j))={\mathcal
O}_{\varepsilon_1}(0),\ Y_j(G\cap{\mathcal
O}_{\varepsilon_1}(g_j))=K_j\cap{\mathcal O}_{\varepsilon_1}(0),\
Y_j(\Gamma_i\cap{\mathcal
O}_{\varepsilon_1}(g_j))=\gamma_{j\sigma}\cap{\mathcal
O}_{\varepsilon_1}(0)\ (\sigma=1\ \text{or}\ 2), $ where
\begin{equation}\label{eqK_jgamma_jsigma}
 K_j=\{y\in{\mathbb R}^2:\ r>0,\ |\omega|<\omega_j\},\qquad \gamma_{j\sigma}=\{y\in\mathbb
R^2:\ r>0,\ \omega=(-1)^\sigma \omega_j\},
\end{equation}
 $\omega,r $ are the polar coordinates,
$0<\omega_j<\pi$. Without loss of generality, we assume that the
principal homogeneous part of the operator $P$ at the point $g_j$
is the Laplace operator in the new variables~$y'$.

\begin{condition}\label{condK1}
Let $g_j\in\overline{\Gamma_i}\cap\mathcal K_\nu$ and
$\Omega_{is}(g_j)=g_k\in\mathcal K_\nu;$ then the transformation $
Y_k\circ\Omega_{is}\circ Y_j^{-1}:{\mathcal
O}_{\varepsilon}(0)\to{\mathcal O}_{\varepsilon_1}(0) $ is the
composition of rotation and homothety.
\end{condition}

Introduce the nonlocal operators $
 \mathbf B_{i}u=\sum\limits_{s=1}^{S_i}
b_{is}(y) u(\Omega_{is}(y))$ for $
   y\in\Gamma_i\cap\mathcal O_{\varepsilon}(\mathcal K)$ and $
\mathbf B_{i}u=0$ for $y\in\Gamma_i\setminus\mathcal
O_{\varepsilon}(\mathcal K), $ where $b_{is}\in C^\infty(\mathbb
R^2)$ are real-valued functions,  $\supp b_{is}\subset\mathcal
O_{{\varepsilon}}(\mathcal K)$.

\begin{condition}\label{cond1.2}
The following relations hold{\rm :}
\begin{equation}\label{eq4-5}
b_{is}(y)\ge0,\qquad \sum\limits_{s=1}^{S_i} b_{is}(y)\le 1,\qquad
 y\in\overline{\Gamma_i};
\end{equation}
\begin{equation}\label{eq6}
\sum\limits_{s=1}^{S_i} b_{is}(g)+\sum\limits_{s=1}^{S_j}
b_{js}(g)<2,\quad
g\in\overline{\Gamma_i}\cap\overline{\Gamma_j}\subset\cK,\qquad\text{if}\
 i\ne j\ \text{and}\
 \overline{\Gamma_i}\cap\overline{\Gamma_j}\ne\varnothing.
\end{equation}
\end{condition}

We will study the nonlocal elliptic problem
\begin{equation}\label{eq2-3}
Pu-q u=f(y), \ y\in G;\qquad u|_{\Gamma_i}-\bB_i u=0,\
y\in\Gamma_i,\ i=1,\dots,N,
\end{equation}
where $q\ge0$, and the same problem with nonhomogeneous nonlocal
conditions. To consider problem~\eqref{eq2-3} in spaces of
continuous functions, we preliminarily study it in the weighted
spaces.

In the sequel, we need norms in weighted spaces depending on the
parameter $q>0$. Set
$$
\n u \n_{H_a^k(G)}=
\left(\|u\|_{H_{a}^k(G)}^2+q^k\|u\|^2_{H_a^0(G)}\right)^{1/2},\qquad
k\ge0,
$$
$$
\n v \n_{H_a^{k-1/2}(\Gamma_i)}=
\left(\|v\|_{H_a^{k-1/2}(\Gamma_i)}^2+q^{k-1/2}\|
v\|_{H_{a}^0(\Gamma_i)}^2\right)^{1/2},\qquad k\ge1,
$$
where $
\|v\|_{H_{a}^0(\Gamma_i)}=\Big(\,\int\limits_{\Gamma_i}\rho^{2a}|v(y)|^2d\Gamma\Big)^2.
$ We also consider the following spaces:
\begin{enumerate}
\item[$\bullet$]
$\cH_a^{k+3/2}(\pG)=\prod\limits_{i=1}^N H_a^{k+3/2}(\Gamma_i)$, $
\n \psi \n_{\cH_a^{k+3/2}(\pG)}=\left(\sum\limits_{i=1}^N \n
\psi_i\n_{H_a^{k+3/2}(\Gamma_i)}^2\right)^{1/2},\ \psi=\{\psi_i\},
$
\item[$\bullet$] $\cH_a^k(G,\pG)=H_a^k(G)\times \cH_a^{k+3/2}(\pG)$, $ \n (f,\psi) \n_{\cH_a^k(G,\pG)}=\left(\n f
\n_{H_a^k(G)}^2+\n\psi\n_{\cH_a^{k+3/2}(\pG)}^2 \right)^{1/2}. $
\end{enumerate}

Consider  the bounded operator $$
\begin{gathered}
 \bL(q):H_{k+1-\delta}^{k+2}(G)\to\mathcal H_{k+1-\delta}^k(G,\pG),\\
  \bL(q) u=\{Pu-q u,\ u|_{\Gamma_i}-\bB_i u\},\quad q\ge0.
\end{gathered}
  $$
   We prove the following theorem in
Sec.~\ref{secSolvabilityWeighted}.
\begin{theorem}\label{th1}
Let Conditions~$\ref{cond1.1}$--$\ref{cond1.2}$ hold, and let $k\ge0$ be fixed. Then
there exists a number $\delta_1>0$ possessing the following property{\rm :} for any
$\delta\in[0,\delta_1]$, there is a number $q_1=q_1(\delta)>0$ such that the operator
$\bL(q)$ has a bounded inverse for $q\ge q_1$ and
\begin{equation}\label{eqth1}
c \n \bL(q)u\n_{\mathcal H_{k+1-\delta}^k(G,\pG)}\le\n u\n_{H_{k+1-\delta}^{k+2}(G)}\le
C\n \bL(q)u\n_{\mathcal H_{k+1-\delta}^k(G,\pG)},\quad q\ge q_1,
\end{equation}
where $c ,C>0$ do not depend on $u$ and $q$.
\end{theorem}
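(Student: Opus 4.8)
The plan is to establish the two inequalities in~\eqref{eqth1} separately and then deduce bijectivity. The left inequality is merely the \emph{boundedness} of $\bL(q)$: it follows directly from the definition of $\bL(q)$, the smoothness of the coefficients $p_{jk},p_j,p_0,b_{is}$, and the definition of the parameter-dependent norms $\n\cdot\n$, together with the standard parameter-dependent interpolation inequality that lets the term $qu$ be absorbed (the factors of $q$ in the target norm being matched by the corresponding powers in the source norm). The substance of the theorem is the right inequality, the \emph{a priori estimate} $\n u\n_{H_{k+1-\delta}^{k+2}(G)}\le C\n\bL(q)u\n_{\cH_{k+1-\delta}^k(G,\pG)}$ uniform in $q\ge q_1$; this yields injectivity and closed range, after which surjectivity must be argued.

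To prove the a priori estimate I would localize by a partition of unity subordinate to the cover of $\oG$ by the neighborhoods $\mathcal O_{\varepsilon}(g_j)$ of the orbit points and a region $G_0$ with $\overline{G_0}\cap\cK=\varnothing$. Away from $\cK$ the nonlocal terms vanish, since $\supp b_{is}\subset\mathcal O_{\varepsilon}(\cK)$, so on $G_0$ the problem reduces to a \emph{local} parameter-elliptic boundary value problem for which the classical Agranovich--Vishik estimates apply; there the weight $\rho=\dist(y,\cK)$ is bounded above and below, so the weighted norms are equivalent to ordinary parameter-dependent Sobolev norms and the large parameter $q$ controls all lower-order contributions.

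The core of the argument is near the orbit points. There I would freeze the coefficients at $g_j$ (using that the principal part is the Laplacian in the coordinates $y'$) and pass, via the maps $Y_j$, to the \emph{model nonlocal problem} in the plane angles $K_j$ with the model nonlocal operators obtained from $\bB_i$; by Condition~\ref{condK1} each transformation $Y_k\circ\Omega_{is}\circ Y_j^{-1}$ is a rotation composed with a homothety, so the model operators have precisely the homogeneous structure for which the weighted solvability theory of~\cite{GurSkub} and the asymptotics of~\cite{GurPetr03} are available. The estimate in $H_{k+1-\delta}^{k+2}$ for the model problem is governed by the operator pencil associated with the angles and the nonlocal coupling along the orbit: it holds provided the weight line $\{\,\Im\lambda=\const(k,\delta)\,\}$ carries no eigenvalue of the pencil. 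Here I expect the \textbf{main obstacle}: one must show that for $\delta=0$ (the critical weight $a=k+1$, corresponding to the ``limit case'' $\mu(y,\oG)=1$) the relevant line is still eigenvalue-free. This is exactly where Condition~\ref{cond1.2} --- the bounds $b_{is}\ge0$, $\sum_s b_{is}\le1$, together with the strict inequality~\eqref{eq6} at the corner points --- enters, excluding the eigenvalue that would otherwise sit on the critical line in the limit case. Having verified this, one fixes $\delta_1>0$ so small that the line remains free of spectrum for all $\delta\in[0,\delta_1]$, which produces the stated number $\delta_1$.

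Finally I would patch the local estimates together. Summing the localized inequalities, the cut-off commutators and the lower-order and nonlocal remainders are of strictly lower order than the leading term, so by choosing $q\ge q_1(\delta)$ large enough they are absorbed into the left-hand side, giving the global a priori estimate with constants independent of $u$ and $q$. Injectivity and closed range follow. For surjectivity I would assemble a right regularizer $R(q)$ from the model solution operators and the local solution operator on $G_0$ through the same partition of unity, so that $\bL(q)R(q)=\bI+T(q)$ with $\|T(q)\|<1$ for $q\ge q_1$; then $\bL(q)$ is onto and, combined with the a priori estimate, boundedly invertible for $q\ge q_1$, which is~\eqref{eqth1}.
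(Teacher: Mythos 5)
Your overall architecture --- localization, frozen-coefficient model problems in the angles $K_j$, reduction to the spectrum of the pencil $\tilde\cL(\lambda)$, parametrix for surjectivity --- is exactly the machinery the paper invokes by citing Theorem~8.1 of \cite{GurSkub} together with Corollary~\ref{cor1}, so the skeleton is sound. But the proposal stops precisely where the paper's actual work begins. You write that Condition~\ref{cond1.2} ``excludes the eigenvalue that would otherwise sit on the critical line'' and then continue with ``having verified this''; that verification is the central new content here (Lemma~\ref{l1}), and you never give it. It is not a perturbation of the small-measure case: the paper assumes a real eigenvalue $\lambda_0$ with eigenvector $\phi$, forms the bounded solution $U=r^{i\lambda_0}\phi(\omega)$ of the homogeneous model problem \eqref{eqPr23'}, takes its real part $V$, and kills it with the maximum principle --- using periodicity of $V$ in $\ln r$ to reduce the noncompact angle to the compact set $\hat K_j$, using the third inequality in \eqref{eq11-13} (the pencil form of \eqref{eq6}) to rule out an interior maximum, and using the fact that $\cG_{j\sigma ks}y^0$ lies \emph{inside} $K_k$ to rule out a boundary maximum even in the limit case $\sum_{k,s}b_{j\sigma ks}=1$. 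Without this argument your proposal establishes nothing beyond what was already known for measures of total mass less than one.

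Second, your reduction of the model-problem estimate to ``the weight line carries no eigenvalue of the pencil'' is an overclaim in this nonlocal setting. That equivalence is the local Kondrat'ev/Mellin statement; here the boundary operators map $\gamma_{j\sigma}$ into the interior of the angles, and the parameter forces the zeroth-order term $-U_j$ and the nonhomogeneous weighted spaces $E_a^k$, so the Mellin transform no longer diagonalizes the problem. Eigenvalue-freeness of the line yields only the Fredholm property (Theorem~9.1 of \cite{GurGiess}); the paper must additionally prove $\ind\cL=0$ by a homotopy to the local problem, and $\ker\cL=0$ by a second maximum-principle argument, which itself requires the asymptotics of \cite{GurPetr03} to get continuity and vanishing of $U_j$ at the vertex, plus a weight-bootstrapping argument to get decay at infinity (Lemma~\ref{l2}, Corollary~\ref{cor1}). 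Both halves of your plan need this full isomorphism, not just the spectral condition: the a priori estimate near the corners needs the trivial kernel, and your right regularizer $R(q)$ presupposes the model solution operators exist, i.e.\ surjectivity. So this step, as written, is a genuine gap as well.
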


\section{Nonlocal Problems in Weighted
Spaces}\label{secSolvabilityWeighted}

We fix an arbitrary orbit $\cK_\nu$ and assume that it consists of
points $g_j$, $j=1,\dots, N_\nu$. Denote by $u_j(y)$ the function
$u(y)$ for $y\in{\mathcal O}_{\varepsilon_1}(g_j)$. If
$g_j\in\overline{\Gamma_i},$ $y\in{\mathcal
O}_{\varepsilon}(g_j),$ and $\Omega_{is}(y)\in{\mathcal
O}_{\varepsilon_1}(g_k),$ then denote by $u_k(\Omega_{is}(y))$ the
function $u(\Omega_{is}(y))$. In this case, nonlocal
problem~\eqref{eq2-3} takes the following form in the
$\varepsilon$-neighborhood of the orbit $\mathcal K_\nu$:
$$
\begin{aligned}
 P u_j-q u_j=f(y),& &&  y\in\mathcal O_\varepsilon(g_j)\cap
 G,\\
u_j(y)- \sum\limits_{s=1}^{S_i} b_{is}(y) u_k(\Omega_{is}(y)) =0,&
&& y\in \mathcal O_\varepsilon(g_j)\cap\Gamma_i,\ i\in\{1\le i\le
N:\ g_j\in\overline{\Gamma_i}\},\ j=1, \dots, N_\nu.
\end{aligned}
$$

Let $y\mapsto y'(g_j)$ be the change of variables described in
Sec.~\ref{subsectStatement}, and let
 $K_j$ and $\gamma_{j\sigma}$ be the sets defined in~\eqref{eqK_jgamma_jsigma}. Set
$ K_j^\varepsilon=K_j\cap\mathcal O_\varepsilon(0),$
$\gamma_{j\sigma}^\varepsilon=\gamma_{j\sigma}\cap\mathcal
O_\varepsilon(0)$. Introduce the functions $ U_{j}(y')=u(y(y'))$
and $F_{j}(y')=f(y(y'))$ for $y'\in K_{j}^\varepsilon, $ where
$\sigma=1$ $(\sigma=2)$ if the transformation $y\mapsto y'(g_j)$
takes $\Gamma_i$ to the side $\gamma_{j1}$ ($\gamma_{j2}$) of the
angle $K_j$. Denote $y'$ by $y$ again. Then, by virtue of
Condition~\ref{condK1}, problem~\eqref{eq2-3} takes the form
\begin{equation}\label{eqPinK-eqBinK}
  P_{j}(y,D_y)U_j-qU_j=F_{j}(y), \  y\in
  K_j^\varepsilon; \qquad
  U_j(y) -\sum\limits_{k=1}^{N_\nu}\sum\limits_{s=1}^{S_{j\sigma k}}
      B_{j\sigma ks}(y)U_k({\mathcal G}_{j\sigma ks}y)
    =0, \  y\in\gamma_{j\sigma}^\varepsilon.
\end{equation}
Here $P_j(y,D_y)$ is a second-order  elliptic differential
operator with real-valued $C^\infty$ coefficients such
that the principal homogeneous part of $P_{j}(0,D_y)$ is the
Laplace operator $\Delta$; $B_{j\sigma ks}(y)$ are smooth
functions; ${\mathcal G}_{j\sigma ks}$ is an operator of rotation
by an angle~$\omega_{j\sigma ks}$ and homothety with a
coefficient~$\chi_{j\sigma ks}>0$ such that $
|(-1)^\sigma \omega_{j}+\omega_{j\sigma ks}|<\omega_{k} $.

Following~\cite{GurSkub}, we freeze the coefficients of
problem~\eqref{eqPinK-eqBinK} at the point $y=0$, replace the
operators $P_{j}(0,D_y)$ by their principal homogeneous parts, and
set $q=1$. Thus, we  consider the following problem:
\begin{equation}\label{eq14-15}
  \Delta U_j-U_j=F_{j}(y), \  y\in
  K_j; \qquad
  \cB_{j\sigma}U\equiv
  U_j(y)-\sum\limits_{k=1}^{N_\nu}\sum\limits_{s=1}^{S_{j\sigma k}}
      b_{j\sigma ks}U_k({\mathcal G}_{j\sigma ks}y)
    =0, \ y\in\gamma_{j\sigma},
\end{equation}
where $U=(U_1,\dots,U_{N_\nu})$ and $b_{j\sigma ks}=B_{j\sigma
ks}(0)$. It follows from Condition~\ref{cond1.2} that
\begin{equation}\label{eq11-13}
b_{j\sigma ks}\ge0,\qquad
\sum\limits_{k=1}^{N_\nu}\sum\limits_{s=1}^{S_{j\sigma k}}
b_{j\sigma ks}\le 1,\qquad
\sum\limits_{k=1}^{N_\nu}\Bigg(\sum\limits_{s=1}^{S_{j1k}} b_{j1
ks}+\sum\limits_{s=1}^{S_{j2k}} b_{j2 ks}\Bigg)<2.
\end{equation}

Problem~\eqref{eq14-15} should be studied in weighted spaces with nonhomogeneous weight
(cf.~\cite{GurSkub}). Denote by $E_a^k(K_j)$ the completion of the set
$C_0^\infty(\overline{K_j}\setminus\{0\})$ with respect to the norm
$$
\|v\|_{E_a^k(K_j)}=\Bigg(\sum\limits_{|\alpha|\le
k}\,\int\limits_{K_j} |y|^{2a}(|y|^{2(|\alpha|-k)}+1)|D^\alpha
v(y)|^2\, dy\Bigg)^{1/2},
$$
where $k\ge0$ is an integer and $a\in\bbR$. Denote by
$E_a^{k-1/2}(\gamma_{j\sigma})$ ($k\ge1$ is an integer) the space
of traces on $\gamma_{j\sigma}$ (with the infimum-norm). Introduce
the spaces of vector-valued functions
$$
\mathcal E_{a}^{k+2}(K)=\prod\limits_{j=1}^{N_\nu}
E_{a}^{k+2}(K_j),\quad \mathcal
E_{a}^k(K,\gamma)=\prod\limits_{j=1}^{N_\nu}\Big(E_{a}^k(K_j)
\times\prod\limits_{\sigma=1,2}
E_{a}^{k+3/2}(\gamma_{j\sigma})\Big).
$$

Consider the operator $\cL:\mathcal E_{1-\delta}^2(K)\to\mathcal
E_{1-\delta}^0(K,\gamma)$ given by $ \cL U=\{\Delta U_j-U_j,\
\cB_{j\sigma}U\}. $

Our aim is to prove that the operator $\cL$ is an isomorphism for
all sufficiently small $\delta\ge 0$. To this end, we consider the
 analytic operator-valued function $$
\begin{gathered}
 \tilde{\mathcal
L}(\lambda):\prod\limits_{j=1}^{N_\nu}
W_2^{2}(-\omega_j,\omega_j)\to\prod\limits_{j=1}^{N_\nu} (L_2(-\omega_j,
\omega_j)\times\mathbb C^2),\\
 \tilde{\mathcal
L}(\lambda)\varphi=\Big\{\varphi_j''-\lambda^2\varphi_j,\
  \varphi_j((-1)^\sigma\omega_j)-\sum\limits_{k,s} (\chi_{j\sigma ks})^{i\lambda}
              b_{j\sigma ks}\varphi_k((-1)^\sigma \omega_j+\omega_{j\sigma
              ks})\Big\}.
\end{gathered}
$$

\begin{lemma}\label{l1}
Let Conditions~$\ref{cond1.1}$--$\ref{cond1.2}$ hold. Then the
line $\Im\lambda=0$ contains no eigenvalues of $\tilde{\mathcal
L}(\lambda)$.
\end{lemma}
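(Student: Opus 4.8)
The plan is to argue by contradiction using a maximum-principle propagation along the orbit, exploiting that $\lambda$ real makes each multiplier $(\chi_{j\sigma ks})^{i\lambda}$ have modulus one. Suppose $\lambda$ with $\Im\lambda=0$ is an eigenvalue, with a nontrivial eigenvector $\varphi=(\varphi_1,\dots,\varphi_{N_\nu})$, $\varphi_j\in W_2^2(-\omega_j,\omega_j)$. Then each $\varphi_j$ solves $\varphi_j''-\lambda^2\varphi_j=0$ on $(-\omega_j,\omega_j)$ and satisfies the nonlocal relations in~\eqref{eq14-15}. Since $\chi_{j\sigma ks}>0$ and $\lambda$ is real, $|(\chi_{j\sigma ks})^{i\lambda}|=1$. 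The basic observation is that $|\varphi_j|^2$ is convex: differentiating and using $\varphi_j''=\lambda^2\varphi_j$ with $\lambda^2\ge0$ gives
$$\big(|\varphi_j|^2\big)''=2|\varphi_j'|^2+2\lambda^2|\varphi_j|^2\ge0 .$$
In particular the maximum of $|\varphi_j|^2$ over $[-\omega_j,\omega_j]$ is attained at an endpoint, and an interior maximum is possible only if $|\varphi_j|^2$ is constant.

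Next I would set $M=\max_j\max_{[-\omega_j,\omega_j]}|\varphi_j|$ and assume $M>0$. By convexity $M$ is attained at some boundary point, i.e. $|\varphi_{j_0}((-1)^{\sigma_0}\omega_{j_0})|=M$ for some $j_0,\sigma_0$. Inserting this into the corresponding nonlocal condition of~\eqref{eq14-15} and using $|(\chi)^{i\lambda}|=1$ together with $b_{j_0\sigma_0 ks}\ge0$, $\sum_{k,s}b_{j_0\sigma_0 ks}\le1$ from~\eqref{eq11-13} and $|\varphi_k|\le M$, one gets
$$M=\Big|\sum_{k,s}(\chi_{j_0\sigma_0 ks})^{i\lambda}b_{j_0\sigma_0 ks}\,\varphi_k\big((-1)^{\sigma_0}\omega_{j_0}+\omega_{j_0\sigma_0 ks}\big)\Big|\le M\sum_{k,s}b_{j_0\sigma_0 ks}\le M .$$
Equality forces $\sum_{k,s}b_{j_0\sigma_0 ks}=1$ and $|\varphi_k((-1)^{\sigma_0}\omega_{j_0}+\omega_{j_0\sigma_0 ks})|=M$ for every $(k,s)$ with $b_{j_0\sigma_0 ks}>0$. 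Crucially, the argument $(-1)^{\sigma_0}\omega_{j_0}+\omega_{j_0\sigma_0 ks}$ is an \emph{interior} point of $[-\omega_k,\omega_k]$ by the strict inequality $|(-1)^\sigma\omega_j+\omega_{j\sigma ks}|<\omega_k$. Hence $|\varphi_k|^2$ is a convex function attaining its global maximum $M^2$ in the interior, so $|\varphi_k|^2\equiv M^2$ on all of $[-\omega_k,\omega_k]$.

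It then remains to derive a contradiction, and here the two regimes separate. If $\lambda\ne0$, constancy of $|\varphi_k|^2$ gives $\big(|\varphi_k|^2\big)''\equiv0$, whence $2\lambda^2|\varphi_k|^2=2\lambda^2M^2=0$, forcing $M=0$, a contradiction. If $\lambda=0$, constancy gives $\varphi_k'\equiv0$, so $\varphi_k$ is a constant of modulus $M$; consequently $|\varphi_k((-1)^{\sigma}\omega_k)|=M$ for \emph{both} $\sigma=1,2$, i.e. both sides of the $k$-th angle are saturated. Applying the equality analysis above to each of the two nonlocal conditions at $g_k$ yields $\sum_{k',s}b_{k1k's}=1$ and $\sum_{k',s}b_{k2k's}=1$, so their sum equals $2$, contradicting the strict inequality in~\eqref{eq11-13} (that is, Condition~\ref{cond1.2}, relation~\eqref{eq6}). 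In either case $M=0$, so $\varphi\equiv0$, and $\lambda$ is not an eigenvalue. I expect the $\lambda=0$ case to be the main obstacle: for $\lambda\ne0$ the convexity immediately kills constant moduli, whereas for $\lambda=0$ constants solve the equation, and only the strict inequality~\eqref{eq6} prevents a genuine constant eigenfunction in the limiting situation $\sum_s b=1$.
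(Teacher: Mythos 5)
Your proof is correct, but it takes a genuinely different route from the paper's. The paper lifts the putative eigenvector $\phi$ to the two-dimensional functions $U_j=r^{i\lambda_0}\phi_j(\omega)$, which are harmonic in the angles $K_j$ and satisfy the homogeneous nonlocal conditions~\eqref{eqPr23'}; it splits $U$ into real and imaginary parts and runs a maximum-principle argument on $V=\Re U$. Since the angles are unbounded, the supremum of $|V_j|$ need not be attained, and the paper restores compactness via the periodicity of $V_j$ in $\ln r$ (this is exactly where $\lambda_0\ne0$ is used), treating $\lambda_0=0$ only with the remark that it is ``analogous but simpler.'' You instead stay one-dimensional: convexity of $|\varphi_j|^2$, i.e.\ $(|\varphi_j|^2)''=2|\varphi_j'|^2+2\lambda^2|\varphi_j|^2\ge0$ for real $\lambda$, replaces the maximum principle, and compactness of $[-\omega_j,\omega_j]$ makes the attainment question trivial. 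The core of both arguments is the same: an equality analysis in the chain $M\le\sum_{k,s}b_{j\sigma ks}|\varphi_k(\cdot)|\le M\sum_{k,s}b_{j\sigma ks}\le M$ (your chain is the exact analogue of \eqref{eq20}--\eqref{eq21}), combined with the strict inclusion $|(-1)^\sigma\omega_j+\omega_{j\sigma ks}|<\omega_k$, which pushes a boundary maximum to an interior point of another interval. What your route buys: it is more elementary (no elliptic maximum principle, no periodicity/compactness step), it handles $\lambda=0$ and $\lambda\ne0$ uniformly until the final step, and it isolates precisely where hypothesis~\eqref{eq6} enters --- only to exclude a constant eigenvector at $\lambda=0$, since for real $\lambda\ne0$ the relations $b_{j\sigma ks}\ge0$ and $\sum_{k,s}b_{j\sigma ks}\le1$ already yield the contradiction through $(|\varphi_k|^2)''\ge 2\lambda^2M^2>0$. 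What the paper's route buys is uniformity of technique: essentially the same two-dimensional maximum-principle argument recurs in the proofs of Lemma~\ref{l2} and Lemma~\ref{l3}, so the lifting viewpoint is set up once and reused there.
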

\begin{proof}
1. We assume that $\lambda_0\ne0$ (the case $\lambda_0=0$ is
analogous but simpler) is an eigenvalue of $\tilde{\mathcal
L}(\lambda)$ and $\lambda_0$ is a real number. Let $\phi(\omega)$
be the corresponding eigenvector. We  represent it in the form $
\phi(\omega)=\phi^1(\omega)+i\phi^2(\omega), $ where
$\phi^1(\omega)$ and $\phi^2(\omega)$ are real-valued $C^\infty$
functions. It is easy to see that the function $
U=r^{i\lambda_0}\phi(\omega)=e^{i\lambda_0\ln r}\phi(\omega) $ is
a solution of the following problem:
\begin{equation}\label{eqPr23'}
  \Delta U_j =0,\ y\in
  K_j;\ \qquad
  \cB_{j\sigma}U     =0, \  y\in\gamma_{j\sigma}.
\end{equation}
 We represent the function
$U$ in the form $U=V+iW$, where $ V=\cos(\lambda_0\ln
r)\phi^1(\omega)-\sin(\lambda_0\ln r)\phi^2(\omega),$ $
W=\cos(\lambda_0\ln r)\phi^2(\omega)+\sin(\lambda_0\ln
r)\phi^1(\omega)$. Since the coefficients in~\eqref{eqPr23'} are
real, it follows that $V$ (as well as $W$) is a solution of the
problem
\begin{equation}\label{eq18-19}
  \Delta V_j=0, \ y\in
  K_j;\qquad
  \cB_{j\sigma}V =0,\ y\in\gamma_{j\sigma}.
\end{equation}

Denote $ M=\max_{j=1,\dots,N_\nu}\sup_{y\in K_j}|V_j(y)|. $ We
claim that $M=0$. Assume the contrary:  $M>0$.

2. If $|V_j(y^0)|=M$ for some $j$ and $y^0\in K_j$, then
$V_j(y)\equiv M$ by  the maximum principle, and  the nonlocal
conditions in~\eqref{eq18-19} imply
\begin{equation}\label{eq20}
M=|V_j(y^0)|=|V_j|_{\gamma_{j\sigma}}|\le
M\sum\limits_{k,s}b_{j\sigma ks},\qquad \sigma=1,2.
\end{equation}
However, $ 0\le\sum\limits_{k,s}b_{j\sigma ks}<1 $ for $\sigma=1$ or $2$ due to
conditions~\eqref{eq11-13}, which contradicts~\eqref{eq20}.

3. Let $|V_j(y_0)|=M$ for some $j$, $\sigma=1$ or $2$, and $y_0\in\gamma_{j\sigma}$. In
this case, taking into account~\eqref{eq11-13}, we again deduce from the nonlocal
conditions in~\eqref{eq18-19} that
\begin{equation}\label{eq21}
M=|V_j(y^0)|\le\sum\limits_{k,s}b_{j\sigma ks}|V_k(\cG_{j\sigma
ks}y^0)|\le M
\end{equation}
for  $\sigma=1$ or $ 2$. Therefore, the inequalities in~\eqref{eq21} reduce to
equalities, and we see that $\sum\limits_{k,s}b_{j\sigma ks}=1$ and $|V_k(\cG_{j\sigma
ks}y^0)|=M$ for at least one pair $(k,s)$. However, this contradicts what has been
already proved, since $\cG_{j\sigma ks}y^0\in K_k$.

4. Finally, we assume that there is a sequence
$\{y^s\}_{s=1}^\infty\subset K_j$ such that $|V_j(y^s)|\to M$ for some $j$ as
 $|y^s|\to 0$ or $|y^s|\to \infty$.

We note that the function $V_j$ is periodic with respect to $\ln r$, i.e., the function
$V_j$ is completely defined by its values on the set $\hat
K_j=\overline{K_j}\cap\big\{1\le r\le e^{2\pi/|\lambda_0|}\big\}. $

Since the set $\hat K_j $ is a compact, there is a sequence $\{\hat
y^s\}_{s=1}^\infty\subset \hat K_j$ such that $|V_j(\hat y^s)|\to M$
as $\hat y^s\to\hat y$, where $\hat y\in \hat K_j$. It follows from
the continuity of the function $V_j(y)$ on the compact $\hat K_j $
that $|V_j(\hat y)|= M$. However, this is impossible due to what
has been proved above.

5. It follows from items 1--4 that $M=0$, hence $V=0$, i.e.,
$\phi^1(\omega)=\phi^2(\omega)=0$.
\end{proof}

\begin{lemma}\label{l2}
Let Conditions~$\ref{cond1.1}$--$\ref{cond1.2}$ hold. Then the
operator $\cL:\mathcal E_1^2(K)\to\mathcal E_1^0(K,\gamma)$ is an
isomorphism.
\end{lemma}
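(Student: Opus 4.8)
The plan is to prove that $\cL:\mathcal E_1^2(K)\to\mathcal E_1^0(K,\gamma)$ is an isomorphism by combining the general Fredholm theory for elliptic problems with nonlocal conditions in weighted spaces (from~\cite{GurSkub}) with the spectral information just established in Lemma~\ref{l1}. The abstract theory tells us that $\cL$, acting between the Kondrat'ev-type spaces $\mathcal E_{1-\delta}^2(K)$ and $\mathcal E_{1-\delta}^0(K,\gamma)$, is Fredholm precisely when the line $\Im\lambda = 1-\delta-1 = -\delta$ (the line determined by the weight exponent) is free of eigenvalues of the associated operator pencil $\tilde{\mathcal L}(\lambda)$. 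For the specific weight $a=1$ (i.e.\ $\delta=0$), the relevant line is $\Im\lambda=0$, which is exactly the line Lemma~\ref{l1} shows to be eigenvalue-free. Hence the first step is to invoke the theorem of~\cite{GurSkub} to conclude that $\cL$ with $a=1$ is Fredholm of index zero, or at least Fredholm with computable index.

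First I would record that, by the results of~\cite{GurSkub}, the operator $\cL:\mathcal E_1^2(K)\to\mathcal E_1^0(K,\gamma)$ is Fredholm because the line $\Im\lambda=0$ contains no eigenvalues of $\tilde{\mathcal L}(\lambda)$; this is the direct payoff of Lemma~\ref{l1}. It then remains to show that both the kernel and cokernel are trivial. For the kernel, I would argue that any $U\in\ker\cL$ is a solution of the homogeneous problem~\eqref{eq14-15} with $F_j=0$; the maximum-principle argument used in Lemma~\ref{l1} (applied now to the genuine solution rather than to the separated-variable form $r^{i\lambda_0}\phi(\omega)$) forces $U\equiv 0$, using that the weight $a=1$ excludes growing or decaying homogeneous solutions other than the trivial one. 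The triviality of the cokernel would follow either from an index computation showing $\ind\cL=0$ together with $\ker\cL=0$, or from a duality/adjoint argument identifying the cokernel with the kernel of a transposed problem that satisfies an analogous maximum principle.

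The main obstacle I expect is controlling the kernel in the unbounded cone setting where the weighted space $\mathcal E_1^2(K)$ allows behavior both at the vertex $r\to0$ and at infinity $r\to\infty$. The maximum principle from Lemma~\ref{l1} was applied to a function periodic in $\ln r$, so the compactness argument (items~2--4 of that proof) closed cleanly; for a general $U\in\ker\cL$ one must instead use the membership $U\in\mathcal E_1^2(K)$ to rule out a nonzero supremum attained only in the limit $r\to0$ or $r\to\infty$. Concretely, the weight exponent $a=1$ is chosen so that nontrivial homogeneous solutions $r^{i\lambda}\phi(\omega)$ (which correspond to eigenvalues on $\Im\lambda=0$) are excluded, and Lemma~\ref{l1} guarantees there are none on that line; the asymptotic expansion of solutions near the vertex then shows the leading terms vanish, and a matching argument at infinity handles the other end. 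The key subtlety is verifying that the decay dictated by the $\mathcal E_1^2(K)$-norm is compatible with the absence of such exponents, so that the boundedness of $|U_j|$ together with the nonlocal maximum principle yields $U\equiv0$; once the kernel is trivial, triviality of the cokernel and hence the isomorphism property follow from the Fredholm index being zero.
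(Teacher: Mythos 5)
Your overall strategy---Fredholm property of $\cL$ deduced from Lemma~\ref{l1} via the pencil $\tilde{\cL}(\lambda)$, triviality of the kernel by a maximum-principle argument that controls the behavior of $U$ both as $r\to0$ and as $r\to\infty$, and then index zero to kill the cokernel---is the paper's strategy, and your treatment of the kernel is essentially the paper's: near the vertex one uses the asymptotic expansion (eigenvalues in the strip $-1-\delta<\Im\lambda_k<0$ only) to get $U_j\in C(\overline{K_j})$ with $U_j(0)=0$; at infinity the paper bootstraps $U\in\mathcal E_a^2(K)$ for arbitrarily large $a$ and inverts $r\mapsto r^{-1}$ to get $|U_j(y)|\to0$; then the interior/boundary dichotomy runs as in Lemma~\ref{l1}, except that in the interior case the equation is $\Delta U_j=U_j$ (not $\Delta U_j=0$), so a constant maximizer is forced to vanish directly. (A side remark: the Fredholm criterion for the cone problem is Theorem~9.1 of~\cite{GurGiess}, not~\cite{GurSkub}, which treats the bounded domain.)

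The genuine gap is the index-zero step, which you leave as ``an index computation \dots or a duality/adjoint argument'' without supplying either. The Fredholm theorem you invoke yields only the Fredholm property; it does not make the index ``computable'' from the absence of eigenvalues on $\Im\lambda=0$, and for nonlocal problems no general index formula is available. The paper closes this by a homotopy: it defines $\cL_t$, $0\le t\le1$, by multiplying every nonlocal coefficient $b_{j\sigma ks}$ by $t$. The key observation is that the coefficients $t\,b_{j\sigma ks}$ still satisfy inequalities~\eqref{eq11-13}, so Lemma~\ref{l1} applies to each pencil $\tilde{\cL}_t(\lambda)$; hence each $\cL_t$ is Fredholm, the index is constant along the homotopy, and $\cL_0$ --- the purely local Dirichlet problem in the cones --- is an isomorphism, giving $\ind\cL=\ind\cL_0=0$. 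Without this (or an equivalent) argument, your surjectivity claim is unsupported. Your fallback via duality is not a viable substitute as stated: the formal adjoint of $\cB_{j\sigma}$ transplants densities from the rays $\gamma_{j\sigma}$ onto the interior curves $\cG_{j\sigma ks}\gamma_{j\sigma}$, so the adjoint problem is not a nonlocal boundary-value problem of the same class, and no maximum principle of the kind used here is available for it.
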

\begin{proof}
1. First, we show that the operator $\cL:\mathcal
E_1^2(K)\to\mathcal E_1^0(K,\gamma)$ has the Fredholm property and
$\ind\cL=0$. Consider the family of operators $\cL_t:\mathcal
E_1^2(K)\to\mathcal E_1^0(K,\gamma)$ given by $ \cL_t
U=\Big\{\Delta U_j-U_j,\
U_j|_{\gamma_{j\sigma}}-t\sum\limits_{k,s}
      b_{j\sigma ks}U_k({\mathcal G}_{j\sigma ks}y)|_{\gamma_{j\sigma}}\Big\},\
      0\le t\le1.
$ Similarly to the operator $\tilde{\cL}(\lambda)$, we introduce
the operators $\tilde{\cL}_t(\lambda)$. By Lemma~\ref{l1}, the
operators $\tilde{\cL}_t(\lambda)$ have no eigenvalues on the line
$\Im\lambda=0$. Therefore, the operators $\cL_t$ have the Fredholm
property due to Theorem~9.1 in~\cite{GurGiess}. By using the
homotopy stability of the index of Fredholm operators, we obtain
$\ind\cL_t=\const$ for $t\in[0,1]$. Since the local operator
$\cL_0$ is an isomorphism (see, e.g., Sec.~10.3
in~\cite{GurGiess}), it follows that $\ind\cL=\ind\cL_0=0$.

2. It remains to prove that $\dim\ker\cL=0$. Let $U\in \mathcal
E_1^2(K)$ be a real-valued solution of the problem
\begin{equation}\label{eq22-23}
\Delta U_j=U_j, \ y\in
  K_j; \qquad
\cB_{j\sigma}U =0,\  y\in\gamma_{j\sigma}.
\end{equation}
Due to the interior regularity theorem, the functions $U_j$ are
infinitely differentiable in $K_j$. Let us prove that $U_j$ are
continuous on $\overline{K_j}$.

Since the line $\Im\lambda=0$ contains no eigenvalues of
$\tilde{\cL}(\lambda)$, it follows from~\cite{SkDu90} that there
is a number $\delta\in[0,1]$ such that the strip
$-1-\delta\le\Im\lambda\le 0$ contains finitely many eigenvalues
$\{\lambda_k\}$ of $\tilde{\cL}(\lambda)$ and
$-1-\delta<\Im\lambda_k<0$. Taking into account that $U_j\in
E_1^2(K_j)\subset H_1^2(K_j)$ is a solution of problem
\eqref{eq22-23} with the right-hand sides $U_j\in
E_1^2(K_j)\subset H_{-\delta}^0(K_j)$ and applying Theorem~2.2
in~\cite{GurPetr03} (about the asymptotics of solutions for
nonlocal problems), we obtain
\begin{equation}\label{eq23''}
U=\sum\limits_{k}\sum\limits_{q=1}^{J_k}\sum\limits_{m=0}^{\varkappa_{qk}-1}c_k^{(m,q)}W_k^{(m,q)}
+U',\qquad
W_k^{(m,q)}(\omega,r)=r^{i\lambda_k}\sum\limits_{l=0}^m\dfrac{1}{l!}(i\ln
r)^l\varphi_k^{(m-l,q)}(\omega),
\end{equation}
where
$\varphi_k^{(0,q)},\dots,\varphi_k^{(\varkappa_{qk}-1,q)}\in\prod\limits_j
C^\infty([-\omega_j,\omega_j])$ is the Jordan chain corresponding
to the eigenvalue $\lambda_k$, $c_k^{(m,q)}$ are constants, and
$U'_j\in H_{-\delta}^2(K_j)$. Thus, using the Sobolev embedding
theorem, we see that the functions $U_j$ are continuous on
$\overline{K_j}$ and $U_j(0)=0$.

Furthermore, we claim that
\begin{equation}\label{eq23'''}
|U_j(y)|\to 0\qquad \text{as}\quad |y|\to\infty.
\end{equation}
 Indeed, since $U\in \mathcal E_1^2(K)$, it
follows that $U\in \mathcal E_1^0(K)$. Combining this with the
fact that $U$ is a solution of homogeneous problem~\eqref{eq22-23}
and applying Theorem~3.2 in~\cite{GurGiess}, we obtain $U\in
\mathcal E_3^2(K)$. Fixing an arbitrary $a\ge 1$ and repeating
these arguments finitely many times, we have $U\in \mathcal
E_a^2(K)$. Setting $V(\omega,r)=U(\omega,r^{-1})$ and using the
Sobolev embedding theorem and the fact that $a$ can be arbitrarily
large, we see that the functions $V_j(y)$ are continuous at the
origin and $|V_j(y)|\to 0$ as $|y|\to0$. This
implies~\eqref{eq23'''}

3. Set $M=\max\limits_{j=1,\dots,N_\nu} \sup\limits_{y\in
\overline{K_j}} |U_j(y)|$. We claim that $M=0$. Assume the
contrary; let $M>0$. Due to the above properties of $U_j$, each of
the functions $|U_j(y)|$ achieves its maximum at some point
$y_0\in\overline{K_j}\setminus\{0\}$. If $|U_j(y_0)|=M$ for some
$j$ and $y_0\in K_j$, then $U_j(y)\equiv\const$ by the maximum
principle. In this case, using the equation in~\eqref{eq22-23}, we
obtain $M\equiv |U_j|=|\Delta U_j|=0$.

If $|U_j(y_0)|=M$ for $y_0\in \gamma_{j\sigma}$, where $\sigma=1$ or $ 2$, then, using
the nonlocal conditions in~\eqref{eq22-23} and inequalities~\eqref{eq11-13}, we obtain
\begin{equation}\label{eq25}
M=|U_j(y_0)|\le \sum\limits_{k,s}b_{j\sigma ks}|U_k(\cG_{j\sigma
ks}y_0)|\le M.
\end{equation}
Thus, the inequalities in~\eqref{eq25} becomes the equalities,
which implies $\sum\limits_{k,s}b_{j\sigma ks}=1$ and
$|U_k(\cG_{j\sigma ks}y_0)|=M$ for at least one pair  $(k,s)$.
However, $\cG_{j\sigma ks}y_0\in K_k$, which is impossible by what
has been proved above.
\end{proof}

\begin{corollary}\label{cor1}
Let Conditions~$\ref{cond1.1}$--$\ref{cond1.2}$ hold. Then there exists a number
$\delta_1>0$ such that  the operator $\cL:\mathcal E_{k+1-\delta}^{k+2}(K)\to\mathcal
E_{k+1-\delta}^k(K,\gamma)$, $k=0,1,2,\dots$, is an isomorphism for $
\delta\in[0,\delta_1]$.
\end{corollary}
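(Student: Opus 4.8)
The plan is to transfer the isomorphism of Lemma~\ref{l2} (which is the special case $k=0$, $\delta=0$) to all $k=0,1,2,\dots$ and all small $\delta\ge0$ by combining a weight shift with an elliptic smoothness shift, while keeping the Fredholm property and the index under control. First I would fix $\delta_1$. By Lemma~\ref{l1} the line $\Im\lambda=0$ carries no eigenvalues of $\tilde{\mathcal L}(\lambda)$; since the eigenvalues of the analytic pencil $\tilde{\mathcal L}(\lambda)$ form a discrete set with only finitely many in any horizontal strip, there is a $\delta_1>0$ for which the closed strip $-\delta_1\le\Im\lambda\le0$ contains no eigenvalues at all. The point to record is that the line governing the Fredholm property of $\cL:\mathcal E_{k+1-\delta}^{k+2}(K)\to\mathcal E_{k+1-\delta}^k(K,\gamma)$ in Theorem~9.1 of~\cite{GurGiess} is $\Im\lambda=-\delta$: at $k=0$, $\delta=0$ this is the line $\Im\lambda=0$ used in Lemma~\ref{l2}, and because the weight $k+1-\delta$ and the domain smoothness $k+2$ increase together the line does not depend on $k$. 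For $\delta\in[0,\delta_1]$ it therefore lies in the eigenvalue-free strip just chosen.

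Next I would establish that $\cL$ is Fredholm with $\ind\cL=0$ for every $k$ and every $\delta\in[0,\delta_1]$. The Fredholm property follows from Theorem~9.1 in~\cite{GurGiess}, since $\Im\lambda=-\delta$ is free of eigenvalues. For the index I would repeat the homotopy of Lemma~\ref{l2}, joining $\cL$ to the local operator $\cL_0$ through $\cL_t$ (the nonlocal term multiplied by $t\in[0,1]$). Since inequalities~\eqref{eq11-13} hold a fortiori for the scaled coefficients $t\,b_{j\sigma ks}$, Lemma~\ref{l1} applies to each $\cL_t$; using continuity of the eigenvalues in $t$ and compactness of $[0,1]$ I would, after shrinking $\delta_1$ if necessary, keep the strip $-\delta_1\le\Im\lambda\le0$ eigenvalue-free uniformly in $t$. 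Thus every $\cL_t$ is Fredholm, the index is constant along the homotopy, and $\ind\cL=\ind\cL_0=0$ because the local operator $\cL_0$ is an isomorphism (Sec.~10.3 in~\cite{GurGiess}).

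It then remains to prove $\dim\ker\cL=0$ for each $k$ and $\delta\in[0,\delta_1]$; together with the Fredholm property and $\ind\cL=0$ this yields the isomorphism. Here I would simply re-run the argument of Lemma~\ref{l2}. Let $U\in\mathcal E_{k+1-\delta}^{k+2}(K)$ be a real solution of $\cL U=0$. By interior regularity the $U_j$ are smooth in $K_j$. Since the line $\Im\lambda=-\delta$ and, by~\cite{SkDu90}, a lower line $\Im\lambda=-1-\delta'$ carry no eigenvalues, the asymptotic expansion of Theorem~2.2 in~\cite{GurPetr03} applies; as every exponent occurring satisfies $\Im\lambda_k<0$, the Sobolev embedding gives $U_j\in C(\overline{K_j})$ with $U_j(0)=0$. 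Raising the weight by repeated application of Theorem~3.2 in~\cite{GurGiess}, exactly as in Lemma~\ref{l2}, yields $|U_j(y)|\to0$ as $|y|\to\infty$. With continuity up to the boundary and decay at infinity in hand, the maximum-principle argument of Lemma~\ref{l2}, via the nonlocal conditions and~\eqref{eq11-13}, forces $U\equiv0$.

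I expect the main obstacle to be the simultaneous bookkeeping of the two shifts: one must verify that increasing the smoothness from $2$ to $k+2$ together with the weight from $1$ to $k+1-\delta$ neither moves the governing line away from $\Im\lambda=-\delta$ nor sweeps an eigenvalue across the strip $[-\delta_1,0]$, so that the base-case isomorphism of Lemma~\ref{l2} genuinely propagates. Coupled with this is the uniform-in-$t$ eigenvalue-free strip needed to keep the homotopy $\cL_t$ Fredholm throughout; this is precisely where the discreteness of the spectrum of $\tilde{\mathcal L}(\lambda)$ and the continuity of its eigenvalues in the homotopy parameter enter in an essential way.
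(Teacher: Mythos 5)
Your proposal is correct, but it takes a genuinely different route from the paper. The paper's own proof is a short transfer argument: Lemma~\ref{l2} gives the isomorphism at $k=0$, $\delta=0$; Lemma~\ref{l1} plus discreteness of the spectrum of $\tilde{\cL}(\lambda)$ gives the eigenvalue-free strip $-\delta_1\le\Im\lambda\le0$; then an analog of Proposition~2.8 in~\cite[Chap.~8]{NP} shifts the weight (kernel and cokernel are unchanged as the weight varies over an eigenvalue-free strip), and Theorems~9.2 and~9.3 in~\cite{GurGiess} lift the smoothness index $k$ --- at no point are Fredholmness, the index, or injectivity re-derived. You instead re-prove all three ingredients in each space $\mathcal E_{k+1-\delta}^{k+2}(K)$: Fredholmness via Theorem~9.1 of~\cite{GurGiess} (your bookkeeping that the governing line is $\Im\lambda=-\delta$, independent of $k$, is right), index zero via the homotopy $\cL_t$, and trivial kernel by re-running the asymptotics and maximum-principle argument of Lemma~\ref{l2}. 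This is sound, and such a re-derivation is indeed necessary if one does not invoke a transfer theorem: for $\delta>0$ the space $\mathcal E_{k+1-\delta}^{k+2}$ allows weaker behavior at infinity than $\mathcal E_1^2$, so kernel triviality does not follow from Lemma~\ref{l2} by inclusion. The cost of your route is concentrated exactly where you flag it: the homotopy at $\delta>0$ requires the line $\Im\lambda=-\delta$ to be eigenvalue-free for every $\cL_t$ simultaneously, hence the uniform-in-$t$ strip via continuity of the spectrum and compactness of $[0,1]$ (the paper never needs this, since its homotopy --- inside Lemma~\ref{l2} --- runs only on $\Im\lambda=0$, where Lemma~\ref{l1} applies to each $\cL_t$ directly); a cheaper variant would be to run your homotopy only at $\delta=0$ and then transport the index across the strip by the same weight-shift stability, which is essentially how the paper argues. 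In exchange, your proof is more self-contained and makes explicit why the kernel remains trivial in the larger spaces, whereas the paper delegates that to the cited stability theorems.
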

\begin{proof}
By Lemma~\ref{l2}, the operator $\cL:\mathcal E_{1}^2(K)\to\mathcal E_{1}^0(K,\gamma)$ is
an isomorphism. On the other hand, it follow from Lemma~\ref{l1} and from the
discreteness of the spectrum of $\tilde L(\lambda)$ (see~\cite{SkDu90}), there exists a
number  $\delta_1>0$ such that  the strip $-\delta_1\le\Im\lambda\le 0$ contains no
eigenvalues of $\tilde{\cL}(\lambda)$. Similarly to Proposition 2.8 in~\cite[Chap.
8]{NP}, one can show that  the operator $\cL:\mathcal E_{1-\delta}^2(K)\to\mathcal
E_{1-\delta}^0(K,\gamma)$ is an isomorphism for  $\delta\in[0,\delta_1]$. Due to Theorems
9.2 and 9.3 in~\cite{GurGiess}, the operator $\cL:\mathcal
E_{k+1-\delta}^{k+2}(K)\to\mathcal E_{k+1-\delta}^k(K,\gamma)$ is also an isomorphism.
\end{proof}

\begin{proof}[Proof of Theorem $\ref{th1}$]
The required assertion follows from Theorem~8.1 in~\cite{GurSkub} and
Corollary~\ref{cor1}.
\end{proof}

\section{Nonlocal Problems in Spaces of Continuous
Functions}\label{subsectNonlocalProblemsInC}

In what follows, we assume that a number $\delta\in[0,1]$ is fixed
in such a way that neither the strip $-\delta\le\Im\lambda\le 0$
nor the line $\Im\lambda=-1-\delta$ contain  an eigenvalue of
$\tilde{\cL}(\lambda)$. The existence of such a number follows
from Lemma~\ref{l1} and from the discreteness of the spectrum of $\tilde L(\lambda)$
(see~\cite{SkDu90}).

Let $q_1$ be the number occurring in Theorem~\ref{th1}. First,
we construct an analog of the barrier function for nonlocal
problems. Consider the following auxiliary  problem:
\begin{equation}\label{eq26-27}
Pv-q_1 v =0,\  y\in G;\qquad v|_{\Gamma_i}-\bB_i v =1,\
y\in\Gamma_i,\ i=1,\dots,N.
\end{equation}

\begin{lemma}\label{l3}
Let Conditions~$\ref{cond1.1}$--$\ref{cond1.2}$ hold. Then problem
\eqref{eq26-27} admits a bounded solution $ v\in C^\infty
(\overline{G}\setminus\cK) $ such that
$\inf\limits_{y\in\overline{G}\setminus\cK}v(y)>0$.
\end{lemma}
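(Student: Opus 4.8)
The plan is to write $v=v_0+w$, where $v_0$ is an explicit bounded function carrying the inhomogeneous nonlocal data near $\cK$ and $w$ is a correction supplied by Theorem~\ref{th1}. The obstruction to applying Theorem~\ref{th1} directly is that the right-hand side of the nonlocal condition in \eqref{eq26-27} is the constant~$1$, which does not vanish at $\cK$ and hence does not lie in the weighted class $\cH_{k+1-\delta}^{k+3/2}(\pG)$ for which $\bL(q_1)$ is invertible. So I would first build a bounded $v_0\in C^\infty(\oG\setminus\cK)$, bounded below by a positive constant near $\cK$, whose nonlocal residual $1-(v_0|_{\Gamma_i}-\bB_i v_0)$ vanishes at every point of $\cK$, and then solve for $w$.

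For $v_0$ I localize near each orbit $\cK_\nu$, passing to the model variables of Sec.~\ref{subsectStatement} and \eqref{eqK_jgamma_jsigma}, where $P$ has principal part $\Delta$ and the conditions take the form \eqref{eqPinK-eqBinK}. I look for $v_0$ equal, up to a cut-off in $r$, to an \emph{affine angular profile} $A_j\omega+B_j$ near each $g_j$, extended by the constant~$1$ away from $\cK$. Such profiles are harmonic, so $\Delta(A_j\omega+B_j)=0$, and a direct estimate shows $Pv_0-q_1v_0=O(r^{-1})$ near $\cK$, which lies in $H_{k+1-\delta}^k(G)$. Requiring the residual to vanish at $g_j$ on the side $\gamma_{j\sigma}$ yields the linear relation $A_j(-1)^\sigma\omega_j+B_j-\sum_{k,s}b_{j\sigma ks}\big(A_k((-1)^\sigma\omega_j+\omega_{j\sigma ks})+B_k\big)=1$. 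This is a square system in $(A_k,B_k)$; its homogeneous version says exactly that $\phi_j(\omega)=A_j\omega+B_j$ is an eigenvector of $\tilde{\cL}(\lambda)$ at $\lambda=0$, i.e. that $U_j=A_j\omega+B_j$ solves \eqref{eqPr23'}. By Lemma~\ref{l1} there is no eigenvalue on $\Im\lambda=0$, so the system is nonsingular and uniquely solvable. Finally, an affine profile attains its minimum $\mu$ at an endpoint, where the nonlocal relation together with $\sum_{k,s}b_{j\sigma ks}\le1$ forces $\mu\ge1+\mu$ were $\mu\le0$; hence $A_j\omega+B_j\ge\mu>0$ on $[-\omega_j,\omega_j]$. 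Thus $v_0$ is bounded, $v_0\ge\mu>0$ near $\cK$, and the residual $(f_1,\psi_1)$, with $f_1=-(Pv_0-q_1v_0)$ and $\psi_1=\{1-(v_0|_{\Gamma_i}-\bB_i v_0)\}$, lies in $\cH_{k+1-\delta}^k(G,\pG)$ and vanishes at $\cK$.

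Next I would apply Theorem~\ref{th1} with $q=q_1$ (taking $k$ large) to obtain $w\in H_{k+1-\delta}^{k+2}(G)$ solving $\bL(q_1)w=(f_1,\psi_1)$, and set $v=v_0+w$. Away from $\cK$ the nonlocal terms are inactive, so interior and boundary elliptic regularity give $v\in C^\infty(\oG\setminus\cK)$. The key point is boundedness near $\cK$: since $\delta$ was fixed in Sec.~\ref{subsectNonlocalProblemsInC} so that $\tilde{\cL}(\lambda)$ has no eigenvalues in $-\delta\le\Im\lambda\le0$ (this is where Lemma~\ref{l1} enters) and none on $\Im\lambda=-1-\delta$, the asymptotic expansion of \cite{GurPetr03}, exactly as in the proof of Lemma~\ref{l2}, shows that every singular term of $w$ carries a factor $r^{|\Im\lambda_k|}$ with $\Im\lambda_k<0$ and that the remainder embeds into continuous functions vanishing at $\cK$. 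Hence $w$ is continuous up to $\cK$ with $w|_{\cK}=0$, so $v$ is bounded and $v\to v_0$ there; in particular $\liminf_{y\to\cK}v(y)\ge\mu/2>0$.

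Positivity then follows from the maximum principle. Put $m=\inf_{\oG\setminus\cK}v$; by the previous paragraph $v\ge\mu/2$ near $\cK$, so if $m<\mu/2$ the infimum is attained at some $y_0\in\oG\setminus\cK$. If $y_0\in G$, then since $Pv-q_1v=0$ and the zeroth-order coefficient $p_0-q_1$ is strictly negative by Condition~\ref{cond1.1}, the strong maximum principle excludes a nonpositive interior minimum unless $v$ is a constant, necessarily $0$, contradicting the boundary condition. If $y_0\in\Gamma_i$ and $m\le0$, then $v(y_0)=1+\sum_s b_{is}(y_0)v(\Omega_{is}(y_0))\ge1+(\sum_s b_{is}(y_0))\,m\ge1+m>m$ by \eqref{eq4-5} and $v(\Omega_{is}(y_0))\ge m$, again a contradiction. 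Hence $m>0$. I expect the main obstacle to be the control of $v$ at the conjugation points — both the unique solvability of the corner system and the boundedness of the correction rest on the absence of eigenvalues of $\tilde{\cL}(\lambda)$ on and near $\Im\lambda=0$, i.e. on Lemma~\ref{l1}, which is precisely where the limit case $\sum_s b_{is}(g)=1$ is absorbed; by contrast, the maximum-principle steps giving positivity are robust and go through verbatim in that limit case.
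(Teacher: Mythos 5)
Your plan is, in structure, exactly the paper's proof: the paper also writes $v$ as a cut-off angular profile near each orbit plus a weighted-space correction, solves the profile system $\varphi_j''=0$ with nonlocal endpoint conditions (i.e. $\tilde\cL(0)\varphi=\{0,1\}$, uniquely solvable by Lemma~\ref{l1} together with the Fredholm property of index zero), proves positivity of the profile from~\eqref{eq11-13}, shows the correction is continuous on $\oG$ and vanishes on $\cK$, and concludes $\inf v>0$ by the same maximum-principle/nonlocal-condition argument. Your finite-dimensional packaging of the angular system and your endpoint argument for $\varphi_j>0$ are correct and equivalent to the paper's step 1, and your final positivity step is the paper's step 4 in attainment (rather than minimizing-sequence) form.

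The genuine gap is at the point you yourself call the key one: the behavior of the correction $w$ at $\cK$. You claim the asymptotics apply ``exactly as in the proof of Lemma~\ref{l2}.'' They do not: in Lemma~\ref{l2} Theorem~2.2 of~\cite{GurPetr03} is applied to a solution whose right-hand sides lie in $H^0_{-\delta}(K_j)$ (and in $H^{3/2}_{-\delta}$ on the rays), and it is precisely this membership that yields a remainder in $H^2_{-\delta}$, hence continuity and vanishing at the vertex. The data generated by your profile $v_0$ are exactly the borderline functions excluded from these classes: the equation residual behaves like $r^{-1}\psi_j(\omega)$, which fails to be in $H^0_{-\delta}$ for every $\delta\ge0$ (the integral $\int_0 r^{-2\delta}r^{-2}\,r\,dr$ diverges), and the boundary residual behaves like $\psi_{j\sigma}r$, which is not the trace of any $H^2_{-\delta}$ function, since such functions are $O(r^{1+\delta})$. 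This is why the paper, in step 3 of its proof, splits the right-hand sides as $F_j^1+F_j^2+r^{-1}\psi_j(\omega)$ and $F_{j\sigma}^1+F_{j\sigma}^2+\psi_{j\sigma}r$ and invokes, besides Theorem~2.2, Lemma~4.3 of~\cite{GurPetr03}; that lemma contributes the additional term $r\sum_{l}\frac{1}{l!}(i\ln r)^l u^{(l)}(\omega)$ in the expansion~\eqref{eqVstavka13_13}, which is neither one of your ``singular terms $r^{|\Im\lambda_k|}$'' nor part of the $H^2_{-\delta}$ remainder, but which does vanish at the corner, so the conclusion $w\in C(\oG)$, $w|_{\cK}=0$ survives. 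As written, your citation does not prove the claim; it must be supplemented either by this treatment of the critical terms, or by a different argument altogether: since by Lemma~\ref{l1} and the discreteness of the spectrum one may fix $\delta>0$, the membership $w\in H^{k+2}_{k+1-\delta}(G)$ already gives $|w(y)|\le C\rho(y)^{\delta}$ by scaling and Sobolev embedding on dyadic annuli, which yields continuity and vanishing at $\cK$ without any asymptotics (this shortcut is unavailable only in the borderline case $\delta=0$, which the paper's setup permits).
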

\begin{proof}
1. We fix an arbitrary orbit $\cK_\nu$ and consider the model
problem
\begin{equation}\label{eqVstavka13_1-2}
  \Delta W_j^1 =0,\  y\in
  K_j^\varepsilon;\qquad
  W_j^1(y)-\sum\limits_{k,s}
      b_{j\sigma ks}W_k^1({\mathcal G}_{j\sigma ks}y)
     =1, \  y\in\gamma_{j\sigma}^\varepsilon.
\end{equation}
Let us
search a solution of problem~\eqref{eqVstavka13_1-2} in the form
\begin{equation}\label{eqVstavka13_3}
W_j^1=\varphi_j(\omega),\qquad |\omega|<\omega_j,\ j=1,\dots,N_\nu.
\end{equation}
Clearly, the functions $ \varphi_1(\omega),\dots ,\varphi_{N_\nu}(\omega) $ must satisfy
the relations
\begin{equation}\label{eq32-33}
  \varphi_j''(\omega)=0,\  |\omega|<\omega_j;\qquad
  \varphi_j((-1)^\sigma\omega_j)-\sum\limits_{k,s}
      b_{j\sigma ks}\varphi_k((-1)^\sigma\omega_j+\omega_{j\sigma ks})
    =1,
\end{equation}
or, equivalently, $ \tilde{\cL}(0)\varphi=\{\tilde F_j,\tilde
F_{j\sigma}\},\ \tilde F_j=0,\ \tilde F_{j\sigma}=1. $ Due to
Lemma~\ref{l1}, the number $\lambda=0$ is not an eigenvalue of
$\tilde{\cL}(\lambda)$. Since the operator $\tilde{\cL}(\lambda)$ has the Fredhom property and its index equals zero~\cite{SkDu90}, there exists a unique
(real-valued) solution $\varphi\in\prod\limits_j
C^\infty([-\omega_j,\omega_j])$ of problem~\eqref{eq32-33}.
Clearly, $\varphi_j(\omega)$ are linear functions. Using the
nonlocal conditions in~\eqref{eq32-33} and
relations~\eqref{eq11-13}, one can check that
$\varphi_j(\omega)>0$ for $\omega\in[-\omega_j,\omega_j]$.

2. Consider a   function $\xi\in C^\infty(\bbR^2)$ such that
$\xi(y)=1$ for $y\in\cO_{\varepsilon/2}(\cK)$ and $\supp\xi\subset
\cO_\varepsilon(\cK)$.

Let us search the solution $v$ of the original problem
\eqref{eq26-27} in the form
\begin{equation}\label{eqVstavka13_4}
v(y)=w^1(y)+v^1(y),\qquad y\in G,
\end{equation}
where $w^1(y)=\xi(y)W_j^1(y'(y))$, $y\in\cO_\varepsilon(g_j)$,
$g_j\in\cK_\nu$, $y'\mapsto y(g_j)$ is the transformation inverse
to the transformation $y\mapsto y'(g_j)$ from
Sec.~\ref{subsectStatement}, and the function $w^1$ is extended by
zero to $G\setminus\cO_\varepsilon(\cK)$; the function $v^1$ is
unknown.

It follows from relations~\eqref{eq26-27}  and
\eqref{eqVstavka13_4} that the function $v^1$ satisfies the
relations
\begin{equation}\label{eqVstavka13_5-6}
Pv^1-q_1v^1 =f^1(y),\  y\in G; \qquad v^1|_{\Gamma_i}-\bB_i  v^1
=f_i^1(y),\ y\in\Gamma_i,
\end{equation}
where
\begin{equation}\label{eqVstavka13_7}
f^1=-Pw^1+q_1w^1,\qquad
f_i^1=1-w^1|_{\Gamma_i}+\bB_i w^1|_{\Gamma_i}.
\end{equation}

Set $V_j^1(y')=v^1(y(y'))$, $F_j(y')=f^1(y(y'))$, and
$F_{j\sigma}(y')=f_i^1(y(y'))$, $y'\in K_j^\varepsilon$, where
$y\mapsto y'(g_j)$ is the transformation from
Sec.~\ref{subsectStatement},
$g_j\in\cK_\nu\cap\overline{\Gamma_i}$. Denote $y'$ by $y$ again.
Then, due to~\eqref{eqVstavka13_1-2}  and~\eqref{eqVstavka13_7},
we have
\begin{equation}\label{eqVstavka13_8}
F_j(y)=(\Delta -P_j(y,D_y))W_j^1+q_1W_j^1,\quad
F_{j\sigma}(y)=\sum\limits_{k,s}(B_{j\sigma ks}(y)-b_{j\sigma
ks})W_k^1(\cG_{j\sigma ks}y),\quad y\in K_j^{\varepsilon/2},
\end{equation}
where $P_j(y,D_y)$ and $B_{j\sigma ks}(y)$ are the same as
in~\eqref{eqPinK-eqBinK}.

Using the facts the the principal homogeneous part of the operator
$P_j(0,D_y)$ is the Laplace operator and $B_{j\sigma
ks}(0)=b_{j\sigma ks}$ and applying the Taylor formula, we deduce
from representation~\eqref{eqVstavka13_3} and
relations~\eqref{eqVstavka13_8} that $F_j\in
H_{k+1-\delta}^k(K_j^{\varepsilon/2})$ and $F_{j\sigma}\in
H_{k+1-\delta}^{k+3/2}(\gamma_{j\sigma}^{\varepsilon/2})$, i.e., $
\{f^1,f_i^1\}\in\cH_{k+1-\delta}^k(G,\pG). $ Therefore, by
Theorem~\ref{th1},  there is a unique solution $v^1\in
H_{k+1-\delta}^{k+2}(G)$ of problem~\eqref{eqVstavka13_5-6}. Since
$k\ge0$ is arbitrary, it follows from the Sobolev embedding
theorem that the function $v$ given by~\eqref{eqVstavka13_4}
belongs to $C^\infty(\oG\setminus\cK)$. Clearly, it  is a solution
of the original problem~\eqref{eq26-27}.

3. Let us prove that $v^1\in C(\oG)$ and $v^1(y)=0$ for $y\in
\cK$. Due to~\eqref{eqVstavka13_5-6}, the functions $V_j^1(y)$
satisfy the following relations:
\begin{equation}\label{eqVstavka13_10-11}
\Delta V_j^1 =F_j^1(y)+F_j(y),\ y\in K_j^{\varepsilon/2};  \qquad
V_j^1(y)-\sum\limits_{k,s}b_{j\sigma ks} V_k^1(\cG_{j\sigma ks}y)
=F_{j\sigma}^1(y)+F_{j\sigma}(y),\
y\in\gamma_{j\sigma}^{\varepsilon/2},
\end{equation}
where $F_j^1=(\Delta -P_j(y,D_y))V_j^1+q_1V_j^1$ and
$F_{j\sigma}^1=\sum\limits_{k,s}(B_{j\sigma ks}(y)-b_{j\sigma
ks})V_k^1(\cG_{j\sigma ks}y).$

Using the facts that the principal homogeneous part of the
operator $P_j(0,D_y)$ is the Laplace operator and $B_{j\sigma
ks}(0)=b_{j\sigma ks}$ and applying the Taylor formula once more,
we represent the right-hand sides of problem
\eqref{eqVstavka13_10-11} as follows:
\begin{equation}\label{eqVstavka13_12}
F_j^1+F_j=F_j^1+F_j^2+r^{-1}\psi_j(\omega),\qquad
F_{j\sigma}^1+F_{j\sigma}=F_{j\sigma}^1+F_{j\sigma}^2+\psi_{j\sigma}r,
\end{equation}
where $\psi_j\in C^\infty([-\omega_j,\omega_j])$, $F_j^1+F_j^2\in
H_{-\delta}^0(K_j^{\varepsilon/2})$ and $\psi_{j\sigma}\in\bbR$,
$F_{j\sigma}^1+F_{j\sigma}^2\in
H_{-\delta}^{3/2}(\gamma_{j\sigma}^{\varepsilon/2})$.

To obtain the asymptotics of the functions $V_j^1$, we denote by
$\{\lambda_k\}$ a (finite) set of eigenvalues of
$\tilde\cL(\lambda)$ lying in the strip
$-1-\delta<\Im\lambda<-\delta$. Then Theorem~2.2
in~\cite{GurPetr03} and Lemma~4.3 in~\cite{GurPetr03} applied to
problem~\eqref{eqVstavka13_10-11} with right-hand
side~\eqref{eqVstavka13_12} imply that
\begin{equation}\label{eqVstavka13_13}
V^1=r\sum\limits_{l=0}^\varkappa\dfrac{1}{l!}(i\ln
r)^lu^{(l)}(\omega)+\sum\limits_k\sum\limits_{q=1}^{J_k}\sum\limits_{m=0}^{\varkappa_{qk}-1}c_k^{(m,q)}W_k^{(m,q)}+V^2,\quad
y\in K_j^{\varepsilon/2},
\end{equation}
where $u^{(l)}\in\prod\limits_j C^\infty([-\omega_j,\omega_j])$,
the functions $W_k^{(m,q)}$ are of the same form as
in~\eqref{eq23''}, $c_k^{(m,q)}$ are some constants, and $V_j^2\in
H_{-\delta}^2(K_j^{\varepsilon/2})$. The asymptotic
formula~\eqref{eqVstavka13_13} and the Sobolev embedding theorem
imply that $V_j^1\in C(\overline{K_j^{\varepsilon/2}})$ and
$V_j^1(0)=0$. Therefore, $v^1\in C(\oG)$ and $v^1(0)=0$ for
$y\in\cK$. In particular, this means that the function $v=v^1+w^1$
is bounded.

4. It remains to show that $m>0$, where
$m=\inf\limits_{y\in\overline{G}\setminus\cK}v(y)$. Assume the
contrary; let $m\le 0$. Let  a sequence
$\{y^k\}\subset\overline{G}\setminus\cK$ be  such that $v(y^k)\to
m$ as $k\to\infty$. Since the sequence $\{y^k\}$ is bounded, it
contains a convergent subsequence (which we also denote by
$\{y^k\}$). Let $y^k\to y^0$ as $k\to\infty$, where
$y^0\in\overline G$.

Using the maximum principle, the nonlocal conditions
in~\eqref{eq26-27}, and relations~\eqref{eq4-5}, one can verify
that $y_0\notin\oG\setminus\cK$. Assume that $y^0\in\cK_\nu$ for
some $\nu$. It follows from what has been proved in item 1 that
there is a constant $A>0$ such that $w^1(y)\ge A$ in some
neighborhood of $y^0$ (excluding the point $y^0$ itself, at which
the function $w^1$ need not be defined). On the other hand, we
have proved in item 3 that $v^1(y^0)=0$. Therefore, $v(y)\ge A/2$
in some neighborhood of $y^0$ (excluding the point $y^0$ itself).
Thus, the sequence $\{v(y^k)\}$ cannot converge to the nonpositive
number $m$.
\end{proof}

For any closed set   $Q\subset\oG$ such that  $Q\cap \cK\ne\varnothing$, we introduce the
space
\begin{equation}\label{eqC_K}
C_\cK(Q)=\{u\in C(Q): u(y)=0,\ y\in Q\cap \cK\}
\end{equation}
with the maximum-norm.

Consider the space of vector-valued functions $ \cC_\cK(\pG)=\prod\limits_{i=1}^N
C_\cK(\overline{\Gamma_i}) $ with the norm $$ \|\psi\|_{\cC_\cK(\pG)}=
\max\limits_{i=1,\dots,N}\max\limits_{y\in\overline{\Gamma_i}}\|\psi_i\|_{C(\overline{\Gamma_i})},
$$ where $\psi=\{\psi_i\}$, $\psi_i\in C_\cK(\overline{\Gamma_i})$.

We study the solvability of the problem
\begin{equation}\label{eq47-48}
Pu-q u =0, \  y\in G;\qquad u|_{\Gamma_i}-\bB_i u=\psi_i(y), \ y\in\Gamma_i,\
i=1,\dots,N,
\end{equation}
in the space of continuous functions.

\begin{lemma}\label{l-th2}
Let Conditions~$\ref{cond1.1}$--$\ref{cond1.2}$ be fulfilled, and let $q\ge q_1$. Then,
for any $\psi=\{\psi_i\}\in \cC_\cK(\pG)$, there exists a unique solution $u\in
C_\cK(\overline G)\cap C^\infty(G)$ of problem~\eqref{eq47-48}. Furthermore,   the
following estimate holds{\rm:}
\begin{equation}\label{eq49}
\|u\|_{C(\overline G)}\le c_1\|\psi\|_{\cC_\cK(\pG)},
\end{equation}
where $c_1>0$ does not depend on  $\psi$ and $q$.
\end{lemma}
\begin{proof}
1. We prove the lemma for the functions $\psi_i$ which are infinitely differentiable and
vanish in a neighborhood of the sets $\overline{\Gamma_i}\cap\cK$. The general case will
follow by the limit passage. Given $\psi_i$ with the above properties, we have $\psi_i\in
H_{-\delta}^{3/2}(\Gamma_i)$. Therefore, by Theorem~\ref{th1}, there exists a unique
solution $u\in H_{1-\delta}^2(G)$ of problem \eqref{eq47-48}. By Lemma 5.1
in~\cite{GurRJMP04}, $u\in C^\infty(\overline{G}\setminus\cK)$. Let $\{\lambda_k\}$ be a
(finite) set of eigenvalues of $\tilde\cL(\lambda)$ lying in the strip
$-1-\delta<\Im\lambda<-\delta$. Then, due  to Theorem~2.2 in~\cite{GurPetr03} (about the
asymptotics of solutions for nonlocal problems), the function $u$ has the following
asymptotics near an arbitrary point $g_j\in\cK_\nu$ ($j=1,\dots,N_\nu$,
$\nu=1,\dots,N_0$):
$$
u(y)=\sum\limits_{k}\sum\limits_{q=1}^{J_k}\sum\limits_{m=0}^{\varkappa_{qk}-1}c_k^{(m,q)}W_{kj}^{(m,q)}
+u'(y), \qquad y\in G\cap\cO_\varepsilon(g_j),
$$
where  $c_k^{(m,q)}$ are constants, the functions
$W_{kj}^{(m,q)}(\omega,r)$ are of the same form as the components
of the vector $W_{k}^{(m,q)}(\omega,r)$ in~\eqref{eq23''}
($\omega,r$ are the polar coordinate centered at the point $g_j$),
and $u'\in H_{-\delta}^2(G)$. Therefore, applying the Sobolev
embedding theorem, we see that $u\in C(\overline G)$ and
\begin{equation}\label{eq50}
 u(y)=0,\qquad y\in\cK.
\end{equation}

2. Let us prove estimate~\eqref{eq49}. Set $
M=\|\psi\|_{\cC_\cK(\pG)} $ and  assume that $M>0$.

Set $ w_\pm(y)=Mv(y)\pm u(y), $ where $v(y)$ is the function from
Lemma~\ref{l3}. Equalities \eqref{eq26-27} and \eqref{eq47-48}
imply that the functions $w_\pm$ satisfy the relations
$$
Pw_\pm-q w_\pm =M(q_1-q)v(y),\  y\in G;\qquad
w_\pm|_{\Gamma_i}-\bB_i w_\pm =M\pm\psi_i(y),\ y\in\Gamma_i,\
i=1,\dots,N.
$$
Since $q_1\le q$, $v(y)>0$, $y\in G$ (by Lemma~\ref{l3}), and
$M\ge\pm\psi_i$, it follows that
\begin{equation}\label{eqVst62'-62''}
Pw_\pm-q w_\pm \le0,\  y\in G,\qquad
w_\pm|_{\Gamma_i}-\bB_i w_\pm \ge0,\  y\in\Gamma_i,\ i=1,\dots,N.
\end{equation}

We claim that $m_\pm=\inf\limits_{y\in\overline
G\setminus\cK}w_\pm(y)\ge0$. Assume the contrary; let  $m_\pm<0$.
As in  item 4 of the proof of Lemma~\ref{l3}, we consider a
sequence $\{y^k\}\subset \oG\setminus\cK$ such that $y^k\to y^0$
and $w_\pm(y_k)\to m_\pm$ as $k\to \infty$, where $y^0\in\overline
G$. The following three cases are possible: $y^0\in G$,
$y^0\in\Gamma_i$ for some $i$, and $y^0\in \cK$.

Let $y^0\in G$. Since $w_\pm(y)$ is continuous in $G$, we see that
it achieves its negative minimum $m$ inside the domain. It follows
from the first inequality in~\eqref{eqVst62'-62''} and from   the
maximum principle that $w_\pm(y)=m_\pm$ for $y\in G$. Combining
this relation with Condition~\ref{cond1.1}, we obtain
$Pw_\pm(y^0)-qw_\pm(y^0)=p_0(y^0)m_\pm-qm_\pm\ge -qm_\pm>0$, which
contradicts the first inequality in~\eqref{eqVst62'-62''}.

Let $y^0\in\Gamma_i$ for some $i$. In this case, it follows
from~\eqref{eqVst62'-62''} and~\eqref{eq4-5} that
\begin{equation}\label{eq54}
m_\pm=w_\pm(y^0)\ge \sum\limits_{s=1}^{S_i}
b_{is}(y^0)w_\pm(\Omega_{is}(y^0))\ge m_\pm\sum\limits_{s=1}^{S_i}
b_{is}(y^0)\ge m_\pm.
\end{equation}
Therefore, the inequalities in~\eqref{eq54} are in fact equalities. This means that
$\sum\limits_{s=1}^{S_i} b_{is}(y^0)=1$ and $w_\pm(\Omega_{is}(y^0))=m_\pm$ for some $s$,
i.e., the function $w_\pm(y)$ achieves its negative minimum at the interior point
$\Omega_{is}(y^0)\in G$. This contradicts what has been proved above.

Finally, assume that $y^0\in\cK_\nu$ for some $\nu$. By Lemma
\ref{l3} we have
$m=\inf\limits_{y'\in\oG\setminus\cK}v(y')>0$, which yields
$$
Mv(y)\ge Mm>0,\qquad y\in\overline G\setminus\cK.
$$
It follows from the latter inequality and from~\eqref{eq50} that
$$
w_\pm(y)=Mv(y)\pm u(y)\ge Mm/2>0
$$
in some neighborhood of $y^0$ (excluding the point $y^0$ itself,
where $w_\pm(y)$ need not be defined). Therefore, the sequence
$\{w_\pm(y^k)\}$ cannot converge to the negative number $m_\pm$.

Thus, we have proved that $\inf\limits_{y\in\overline
G\setminus\cK}w_\pm(y)\ge0$, which yields
$$
|u(y)|\le M v(y)\le M\sup\limits_{y'\in\overline
G\setminus\cK}v(y'),\qquad y\in \overline G\setminus\cK.
$$
Since the function $u(y)$ is continuous in $\overline G$, the last
inequality implies estimate~\eqref{eq49}, where
$c_1=\sup\limits_{y'\in\overline G\setminus\cK}v(y')$. Clearly, the constant
 $c_1>0$ does not depend on $\psi$ and $q$.
\end{proof}

Now we consider the problem
\begin{equation}\label{eq47-48'}
Pu-q u =f(y), \  y\in G;\qquad u|_{\Gamma_i}-\bB_i u=\psi_i(y), \ y\in\Gamma_i,\
i=1,\dots,N.
\end{equation}

\begin{theorem}\label{th2}
Let Conditions~$\ref{cond1.1}$--$\ref{cond1.2}$ be fulfilled, and let $q\ge q_1$. Then,
for any $f\in C(\oG)$ and $\psi=\{\psi_i\}\in \cC_\cK(\pG)$, there exists a unique
solution $u\in  C_\cK(\overline G)\cap W_{2,\loc}^2(G)$ of problem~\eqref{eq47-48'}.
Furthermore, if $f=0$, then $u\in   C_\cK(\oG)\cap C^\infty(G)$ and  the following
estimate holds{\rm:}
$$
\|u\|_{C(\overline G)}\le c_1\|\psi\|_{\cC_\cK(\pG)},
$$
where $c_1>0$ does not depend on  $\psi$ and $q$.
\end{theorem}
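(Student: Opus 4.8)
The plan is to split the problem into a ``nonlocal-data'' part and a ``right-hand-side'' part and to treat the latter by approximation. I would write $u=u_\psi+u_f$, where $u_\psi$ solves \eqref{eq47-48} (that is, \eqref{eq47-48'} with $f=0$) and $u_f$ solves \eqref{eq47-48'} with $\psi=0$. The function $u_\psi$ is furnished directly by Lemma~\ref{l-th2}: it lies in $C_\cK(\oG)\cap C^\infty(G)$ and satisfies estimate~\eqref{eq49}. In particular, when $f=0$ we have $u=u_\psi$, which already gives the regularity $u\in C_\cK(\oG)\cap C^\infty(G)$ and the estimate asserted in the ``furthermore'' part. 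It therefore remains to construct $u_f\in C_\cK(\oG)\cap W_{2,\loc}^2(G)$ solving $Pu_f-qu_f=f$, $u_f|_{\Gamma_i}-\bB_i u_f=0$, for an arbitrary $f\in C(\oG)$.

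To build $u_f$ I would approximate. Choose $f_n\in C^\infty(\oG)$ with $\|f_n-f\|_{C(\oG)}\to0$. Since the weight $\rho^{2(1-\delta)}$ is bounded on $\oG$, each $f_n\in H_{1-\delta}^0(G)$, so Theorem~\ref{th1} (with $k=0$ and $\psi=0$) gives a unique $u_n\in H_{1-\delta}^2(G)\subset W_{2,\loc}^2(G)$ with $Pu_n-qu_n=f_n$ and $u_n|_{\Gamma_i}-\bB_i u_n=0$. Near each conjugation point the local right-hand side $f_n$ lies in $H_{-\delta}^0$, so the asymptotic expansion of Theorem~2.2 in~\cite{GurPetr03}, exactly as in item~1 of the proof of Lemma~\ref{l-th2}, shows that all singular exponents $r^{i\lambda_k}$ with $-1-\delta<\Im\lambda_k<-\delta$ and the $H^2_{-\delta}$-remainder vanish at the points of $\cK$; hence $u_n\in C_\cK(\oG)$.

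The main new ingredient is a uniform maximum-principle estimate for $\{u_n\}$, and here the right barrier is simply the constant $z_n=q^{-1}\|f_n\|_{C(\oG)}$. By Condition~\ref{cond1.1} ($p_0\le0$) and the fact that $z_n$ is constant, $Pz_n-qz_n=(p_0-q)z_n\le-\|f_n\|_{C(\oG)}$, while $\sum_s b_{is}\le1$ from \eqref{eq4-5} gives $z_n|_{\Gamma_i}-\bB_i z_n\ge z_n(1-\sum_s b_{is})\ge0$. Thus $w_\pm=z_n\pm u_n$ satisfy $Pw_\pm-qw_\pm\le0$ in $G$ and $w_\pm|_{\Gamma_i}-\bB_i w_\pm\ge0$, and $w_\pm>0$ on $\cK$; applying to $w_\pm$ the three-case argument of Lemma~\ref{l-th2} (interior point, point on $\Gamma_i$, point of $\cK$, where $u_n\in C_\cK(\oG)$ excludes a negative extremum) yields $\|u_n\|_{C(\oG)}\le q^{-1}\|f_n\|_{C(\oG)}$. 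The same estimate applied to $u_n-u_m$, whose right-hand side is $f_n-f_m$, shows $\{u_n\}$ is Cauchy in $C(\oG)$; let $u_f$ be its uniform limit. Then $u_f\in C_\cK(\oG)$, and since $\bB_i$ is continuous on $C(\oG)$ the nonlocal conditions pass to the limit, giving $u_f|_{\Gamma_i}-\bB_i u_f=0$.

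It remains to place $u_f$ in $W_{2,\loc}^2(G)$ and to verify the equation; this limit passage, rather than the estimate, is the step I expect to require the most care. For subdomains with $\overline{G'}\subset\overline{G''}\subset G$, interior $W_2^2$-estimates for $P$ give $\|u_n\|_{W_2^2(G')}\le C(\|Pu_n\|_{L_2(G'')}+\|u_n\|_{L_2(G'')})$, and since $Pu_n=qu_n+f_n$ with $\{u_n\}$ bounded in $C(\oG)$ and $f_n\to f$ in $C(\oG)\subset L_2(G'')$, the right-hand side is bounded independently of $n$. Hence $\{u_n\}$ is bounded in $W_2^2(G')$; a weakly convergent subsequence has weak limit equal to $u_f$ by uniform convergence, so $u_f\in W_2^2(G')$, and letting $G'\uparrow G$ gives $u_f\in W_{2,\loc}^2(G)$. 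Passing to the limit in $Pu_n-qu_n=f_n$ weakly in $L_2(G')$ yields $Pu_f-qu_f=f$ a.e. in $G$. Setting $u=u_\psi+u_f$ completes existence. For uniqueness, the difference $w$ of two solutions lies in $C_\cK(\oG)\cap W_{2,\loc}^2(G)$ and solves the homogeneous problem ($f=0$, $\psi=0$); applying the same three-case maximum-principle argument to $w$, again using $w\in C_\cK(\oG)$ to exclude a nonzero extremum at $\cK$, forces $w\equiv0$.
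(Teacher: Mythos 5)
Your proof is correct, and its skeleton coincides with the paper's: you make the same reduction via Lemma~\ref{l-th2} (which settles the case $f=0$, the ``furthermore'' estimate, and, after interior regularity, uniqueness), and for the part with $\psi=0$ you rely on the same two ingredients, Theorem~\ref{th1} and the asymptotics of Theorem~2.2 in~\cite{GurPetr03}. Where you genuinely diverge is in how these ingredients are deployed for a general $f\in C(\oG)$: you apply them only to smooth approximants $f_n$, and then recover the solution for $f$ by a maximum-principle stability estimate (your constant barrier $q^{-1}\|f_n\|_{C(\oG)}$ is in effect a proof of Corollary~\ref{cor2.1} before the paper states it), a Cauchy-sequence argument in $C(\oG)$, and interior $W_2^2$ estimates with weak limits. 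The paper shows this whole approximation layer is unnecessary: the very inclusion you invoke for $f_n$ holds for $f$ itself --- a continuous function lies in $H_{-\delta}^0(G)$ because the weight $\rho^{-2\delta}$ is integrable in two dimensions for $\delta<1$ --- so Theorem~\ref{th1} applied directly with right-hand side $f$ yields the unique solution $u\in H^2_{1-\delta}(G)$, Lemma~5.1 in~\cite{GurRJMP04} gives $u\in W_2^2(G\setminus\overline{\cO_\sigma(\cK)})$ for every $\sigma>0$, and the asymptotic expansion plus the Sobolev embedding gives $u\in C_\cK(\oG)$. What your longer route buys is self-containedness: you avoid citing Lemma~5.1 of~\cite{GurRJMP04} (replacing it with standard interior estimates and weak compactness), and you spell out the uniqueness argument that the paper leaves implicit in the reduction to Lemma~\ref{l-th2}; the one point you should make explicit there is that a $W^2_{2,\loc}$ solution of $Pw=qw$ is smooth in $G$ by interior elliptic regularity, so the strong maximum principle is legitimately applied in the interior case.
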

\begin{proof}
Due to Lemma~\ref{l-th2}, it suffices to prove the existence of a solution  $u\in
C_\cK(\overline G)\cap W_{2,\loc}^2(G)$ for problem~\eqref{eq47-48'} with $f\in C(\oG)$
and $\psi_i=0$. Since $f\in C(\oG)\subset H_{-\delta}^0(G) $, it follows from
Theorem~\ref{th1} that there is a unique solution  $u\in H_{1-\delta}^2(G)$ of
problem~\eqref{eq47-48'} with the right-hand sides  $\psi_i=0$. By Lemma~5.1
in~\cite{GurRJMP04} $u\in W_2^2(G\setminus\overline{\cO_\sigma(\cK)})$ for all
$\sigma>0$.

Let $\{\lambda_k\}$ be a (finite) set of eigenvalues of $\tilde\cL(\lambda)$ lying in the
strip $-1-\delta<\Im\lambda<-\delta$. Then, due  to Theorem~2.2 in~\cite{GurPetr03}
(about the asymptotics of solutions for nonlocal problems), the function $u$ has the
following asymptotics near an arbitrary point $g_j\in\cK_\nu$ ($j=1,\dots,N_\nu$,
$\nu=1,\dots,N_0$):
$$
u(y)=\sum\limits_{k}\sum\limits_{q=1}^{J_k}\sum\limits_{m=0}^{\varkappa_{qk}-1}c_k^{(m,q)}W_{kj}^{(m,q)}
+u'(y), \qquad y\in G\cap\cO_\varepsilon(g_j),
$$
where  $c_k^{(m,q)}$ are constants, the functions $W_{kj}^{(m,q)}(\omega,r)$ are of the
same form as the components of the vector $W_{k}^{(m,q)}(\omega,r)$ in~\eqref{eq23''}
($\omega,r$ are the polar coordinate centered at the point $g_j$), and $u'\in
H_{-\delta}^2(G)$. Therefore, applying the Sobolev embedding theorem, we see that $u\in
C_\cK(\overline G)$.
\end{proof}

\section{Existence of Feller semigroups}\label{secFeller}

We introduce the space   $$ C_B(\oG)=\{u\in C_\cK(\oG): u|_{\Gamma_i}-\bB_i u=0, \
y\in\Gamma_i,\ i=1,\dots,N\}. $$ We prove in this section that the unbounded operator
$\bP_B: \Dom(\bP_B)\subset C_B(\overline G)\to C_B(\overline G)$ given by
\begin{equation}\label{eqbP_BBoundedPert}
\bP_B u=Pu,\quad   \Dom(\bP_B)=\{u\in C_B(\oG)\cap W^2_{2,\loc}(G): Pu\in C_B(\overline
G)\},
\end{equation}
is a generator of a Feller semigroup.

\begin{remark}
Consider a nontransversal nonlocal condition of the form
(cf.~\cite{Ventsel,Taira1,Taira3,SkDAN89, SkRJMP95,GalSkMs,GalSkJDE})
\begin{equation}\label{eqNonlocalGeneral1}
b(y)u(y)+\int\limits_\oG[u(x)-u(\eta)]m(y,d\eta)=0,\quad y\in\pG,
\end{equation}
where  $b(y)\ge0$, $m(y,\cdot)$ is a nonnegative Borel measure, and $b(y)+m(y,\oG)>0$,
$y\in\pG$.

Introduce a nonnegative Borel measure $\mu(y,\cdot)=m(y,\cdot)/[b(y)+m(y,\oG)]$. Then the
nonlocal condition~\eqref{eqNonlocalGeneral1} can be written as follows:
\begin{equation}\label{eqNonlocalGeneral2}
u(y) - \int\limits_\oG u(\eta)\mu(y,d\eta)=0,\quad y\in\pG.
\end{equation}

Assume that $\mu(y,\cdot)=0$ for  $y\in\cK$ and  $\mu(y,\cdot)$ is a linear combination
of delta-functions, supported at the points  $\Omega_{is}(y)$, with the coefficients
$b_{is}(y)$ for $y\in\Gamma_i$. Then the nonlocal conditions~\eqref{eqNonlocalGeneral2}
and~\eqref{eqNonlocalGeneral1} assume  the form
$$
u|_{\Gamma_i}-\bB_i u=0, \ y\in\Gamma_i,\ i=1,\dots,N;\quad u(y)=0,\ y\in\cK.
$$
\end{remark}

\begin{lemma}\label{l2.3}
Let Conditions  $\ref{cond1.1}$--$\ref{cond1.2}$ hold. Let a function  $u\in C_B(\oG)$
achieve  its positive maximum at a point  $y^0\in\overline G$, and let $Pu\in C(G)$. Then
there is a point  $y^1\in G$ such that  $u(y^1)=u(y^0)$ and $Pu(y^1)\le 0$.
\end{lemma}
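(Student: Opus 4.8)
The plan is to reduce the assertion to the classical interior maximum principle for the elliptic operator $P$. Set $M=u(y^0)=\max_{\oG}u>0$. Since $u\in C_B(\oG)\subset C_\cK(\oG)$, we have $u=0$ on $\cK$, so positivity of $M$ immediately rules out $y^0\in\cK$; hence $y^0$ lies either in the interior $G$ or in the relative interior of exactly one curve $\Gamma_i$ (the $\Gamma_i$ being the disjoint components of $\pG\setminus\cK$). In the first case I would simply take $y^1=y^0$ and pass to the interior step below. The real content is the boundary case $y^0\in\Gamma_i$, where I would use the nonlocal condition to move from $\pG$ into $G$ without decreasing $u$.

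Assume $y^0\in\Gamma_i$. The homogeneous nonlocal condition defining $C_B(\oG)$ gives $u(y^0)=\bB_i u(y^0)$. First I would note that $y^0$ must lie in $\cO_\varepsilon(\cK)$, for otherwise $\bB_i u(y^0)=0$ by the definition of $\bB_i$, whence $M=u(y^0)=0$, a contradiction. Thus $u(y^0)=\sum_{s=1}^{S_i}b_{is}(y^0)u(\Omega_{is}(y^0))$, and combining $b_{is}\ge0$ and $\sum_s b_{is}(y^0)\le1$ from \eqref{eq4-5} with $u\le M$ I obtain
\[
M=\sum_{s=1}^{S_i}b_{is}(y^0)u(\Omega_{is}(y^0))\le M\sum_{s=1}^{S_i}b_{is}(y^0)\le M.
\]
Both inequalities are therefore equalities. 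The second one (using $M>0$) forces $\sum_s b_{is}(y^0)=1$, so there is an index $s_0$ with $b_{is_0}(y^0)>0$; the first one then forces $u(\Omega_{is_0}(y^0))=M$. Because $y^0\in\Gamma_i\cap\cO_\varepsilon(\cK)$ and the diffeomorphisms satisfy $\Omega_{is}(\Gamma_i\cap\cO_\varepsilon(\cK))\subset G$, the point $y^1:=\Omega_{is_0}(y^0)$ is an interior point of $G$ at which $u(y^1)=M=u(y^0)$. A single application of the nonlocal map already lands inside $G$, so no iteration is needed.

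It remains to check $Pu(y^1)\le0$ at the interior maximum $y^1\in G$, which I would do by contradiction. If $Pu(y^1)>0$, then by continuity of $Pu$ on $G$ there is a ball $B\subset\subset G$ centered at $y^1$ with $Pu>0$, in particular $Pu\ge0$, on $B$. Since $p_0\le0$ by Condition~\ref{cond1.1} and $u$ attains at the interior point $y^1$ its global maximum $M>0$, the strong maximum principle for strong ($W^2_{2,\loc}$) solutions forces $u\equiv M$ on $B$; but then $Pu=p_0 M\le0$ throughout $B$, contradicting $Pu>0$. Hence $Pu(y^1)\le0$. I expect the two points demanding care to be, first, confirming that the nonlocal term is genuinely active at $y^0$ (i.e.\ $y^0\in\cO_\varepsilon(\cK)$) so that the equality chain can be run, and second, the fact that $u$ is only a strong solution in $W^2_{2,\loc}(G)$, so that the interior maximum principle must be invoked in its $W^2$ (Aleksandrov--Bony) form rather than the classical $C^2$ form. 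The geometric reduction itself is short; these two verifications are where the argument must be pinned down.
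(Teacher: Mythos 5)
Your proof is correct and follows essentially the same route as the paper's: the nonlocal condition $u(y^0)=\bB_i u(y^0)$ combined with \eqref{eq4-5} transfers the positive boundary maximum to an interior point $\Omega_{is_0}(y^0)\in G$, after which the interior maximum principle yields $Pu\le 0$ there. The only differences are presentational: the paper packages the boundary step as a contradiction argument (assuming $u(y^0)>u(y)$ for all $y\in G$, so that the strict inequality kills the equality chain at once) rather than your direct equality analysis, and it leaves implicit the point you rightly flag, namely that the interior maximum principle must be applied in its strong-solution ($W^2_{2,\loc}$) form.
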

\begin{proof}
 If  $y^0\in G$, then the conclusion of the lemma follows from the maximum principle.
Let  $y^0\in\pG$. Assume that the lemma is not true, i.e.,  $u(y^0)>u(y)$ for all $y\in
G$.

Since  $u(y^0)>0$, it follows that $y^0\in \Gamma_i\cap\cO_\varepsilon(\cK)$ for some $i$
and  $b_{is}(y^0)>0$ for some~$s$. Taking into account that $\Omega_{is}(y^0)\in G$ and
$u(y^0)>u(y)$ for all $y\in G$, we have  $u(y^0)-u(\Omega_{is}(y^0))>0$. Therefore,
using~\eqref{eq4-5}, we obtain
$$
0=u(y^0)-\sum\limits_{s=1}^{S_i}b_{is}(y^0)u(\Omega_{is}(y^0))\ge
\sum\limits_{s=1}^{S_i}b_{is}(y^0)(u(y^0)-u(\Omega_{is}(y^0)))>0.
$$
The contradiction proves the lemma.
\end{proof}

\begin{corollary}\label{cor2.1}
Let Conditions $\ref{cond1.1}$--$\ref{cond1.2}$ hold. Let  $u\in C_B(\oG)$ be a solution
of the equation
$$qu(y)-Pu(y)=f(y),\quad y\in G,$$ where  $q>0$ and $f\in
C(\oG)$. Then
\begin{equation}\label{eqcor2.1}
\|u\|_{C(\oG)}\le\dfrac{1}{q}\|f\|_{C(\oG)}.
\end{equation}
\end{corollary}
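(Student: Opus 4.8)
The plan is to prove the maximum-principle estimate~\eqref{eqcor2.1} by applying Lemma~\ref{l2.3} to a suitable rescaling of $u$ and exploiting the sign condition $p_0\le 0$ from Condition~\ref{cond1.1}. First I would reduce to controlling the positive part of $u$: it suffices to show that if $u$ attains a positive maximum $M=\max_{\oG}u>0$ at some $y^0$, then $M\le\tfrac1q\|f\|_{C(\oG)}$, and then apply the same reasoning to $-u$ (which is again a solution of the same type of equation with right-hand side $-f$, since $P$ is linear) to bound $-\min_{\oG}u$. Combining the two bounds yields $\|u\|_{C(\oG)}\le\tfrac1q\|f\|_{C(\oG)}$.

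\textbf{Main argument.} Suppose $M=\max_{\oG}u=u(y^0)>0$. Since $u\in C_B(\oG)$ and $Pu=qu-f\in C(\oG)\subset C(G)$, Lemma~\ref{l2.3} applies and furnishes a point $y^1\in G$ with $u(y^1)=u(y^0)=M$ and $Pu(y^1)\le 0$. Evaluating the equation $qu-Pu=f$ at $y^1$ gives
\begin{equation*}
qM=qu(y^1)=Pu(y^1)+f(y^1)\le f(y^1)\le\|f\|_{C(\oG)},
\end{equation*}
so $M\le\tfrac1q\|f\|_{C(\oG)}$, as claimed. The key point is that Lemma~\ref{l2.3} relocates the maximum from a possible boundary point to an \emph{interior} point $y^1\in G$ where the classical maximum-principle inequality $Pu(y^1)\le0$ is available; without this relocation the differential operator $P$ would not be directly usable at a boundary maximum where the nonlocal condition, rather than a differential inequality, governs $u$. (One subtlety worth noting in writing: the sign convention $Pu\le0$ at an interior maximum is exactly what Lemma~\ref{l2.3} delivers, and Condition~\ref{cond1.1}.2 guaranteeing $p_0\le0$ is implicitly what makes the underlying maximum principle valid for the operator $qu-Pu$.)

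\textbf{The main obstacle} is ensuring the hypotheses of Lemma~\ref{l2.3} are met for \emph{both} $u$ and $-u$. For $u$ this is immediate once $u$ has a positive maximum; but to bound the minimum of $u$ one applies the lemma to $v=-u$, which requires $v$ to achieve a positive maximum, i.e.\ that $\min_{\oG}u<0$. If instead $\min_{\oG}u\ge0$ (or $\max_{\oG}u\le0$) the corresponding half of the estimate is trivial, so the argument splits into the cases according to the signs of the extrema of $u$. Since $v=-u$ again lies in $C_B(\oG)$ (the space and the nonlocal operators $\bB_i$ are linear) and satisfies $qv-Pv=-f$ with $-f\in C(\oG)$, Lemma~\ref{l2.3} applies verbatim, yielding $-\min_{\oG}u\le\tfrac1q\|f\|_{C(\oG)}$. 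Assembling the two one-sided bounds completes the proof of~\eqref{eqcor2.1}.
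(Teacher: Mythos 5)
Your proof is correct and follows essentially the same route as the paper: invoke Lemma~\ref{l2.3} to move the positive maximum to an interior point $y^1\in G$ where $Pu(y^1)\le 0$, then evaluate $qu-Pu=f$ at $y^1$. The paper merely compresses your two-sided case analysis by assuming at once that $\max_{\oG}|u|=u(y^0)>0$ (the opposite sign being handled by the same symmetry $u\mapsto -u$ that you spell out explicitly).
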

\begin{proof}
Let $\max\limits_{y\in\oG}|u(y)|=u(y^0)>0$ for some  $y^0\in \oG$. Then, by
Lemma~\ref{l2.3}, there is a point  $y^1\in G$ such that   $u(y^1)=u(y^0)$ and
$Pu(y^1)\le 0$. Hence,
$$
\|u\|_{C(\oG)}=u(y^0)=u(y^1)=\dfrac{1}{q}(Pu(y^1)+f(y^1))\le \dfrac{1}{q}\|f\|_{C(\oG)}.
$$
\end{proof}

\begin{lemma}\label{l2.6}
Let Conditions  $\ref{cond1.1}$--$\ref{cond1.2}$ hold. Then  $\Dom(\bP_B)$ is dense in
$C_B(\oG)$.
\end{lemma}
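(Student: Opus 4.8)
The plan is to show that $\Dom(\bP_B)$ is dense in $C_B(\oG)$ by establishing that its closure contains a dense set, which I will do by exhibiting, for an arbitrary $u\in C_B(\oG)$, a sequence in $\Dom(\bP_B)$ converging to it in the maximum norm. The natural approach is the resolvent method: for each $q\ge q_1$, consider the element $u_q=q(q\bI-\bP_B)^{-1}u$ and show that $u_q\to u$ as $q\to\infty$. For this to make sense I first need to know that $(q\bI-\bP_B)^{-1}$ exists as a bounded operator on $C_B(\oG)$; this should follow from Theorem~\ref{th2} applied with $\psi=0$, which gives, for any $f\in C(\oG)$, a unique solution $u\in C_\cK(\oG)\cap W^2_{2,\loc}(G)$ of $Pu-qu=-f$ with the homogeneous nonlocal condition, so that $u\in C_B(\oG)$ and in fact $u\in\Dom(\bP_B)$. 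Thus $(q\bI-\bP_B):\Dom(\bP_B)\to C_B(\oG)$ is a bijection, and Corollary~\ref{cor2.1} gives the contraction estimate $\|(q\bI-\bP_B)^{-1}\|\le 1/q$.

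Granting the resolvent, the first natural step is to prove density on a convenient dense subclass where the limit is transparent. A good candidate is the set of functions $u\in C_B(\oG)$ that are smooth enough that $Pu\in C(\oG)$; more precisely I would try to show directly that if $u\in\Dom(\bP_B)$ already, then $u_q=q(q\bI-\bP_B)^{-1}u\to u$. Writing $g=\bP_B u=Pu\in C_B(\oG)$, one has $u_q-u=(q\bI-\bP_B)^{-1}\bP_B u=(q\bI-\bP_B)^{-1}g$, and the contraction estimate yields $\|u_q-u\|_{C(\oG)}\le \|g\|_{C(\oG)}/q\to 0$. Hence every element of $\Dom(\bP_B)$ is itself approximated, which is trivially true, so the real content is to approximate a general $u\in C_B(\oG)$ by elements of $\Dom(\bP_B)$. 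The clean way is to show that the resolvent image is dense: I would argue that $\{u_q:q\ge q_1\}\subset\Dom(\bP_B)$ and that $u_q\to u$ for every $u$ in a dense subset $\cD\subset C_B(\oG)$, and then invoke the uniform bound $\|u_q\|\le\|u\|$ together with a standard $\varepsilon/3$ argument to extend the convergence to all of $C_B(\oG)$.

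Therefore the crux becomes identifying a suitable dense subclass $\cD$ on which $u_q\to u$ can be verified by hand, and the candidate is the smooth functions in $C_B(\oG)$ for which $Pu$ is continuous up to the boundary. I would take $\cD=\{u\in C_B(\oG):u\in C^\infty(\overline G\setminus\cK)\cap W^2_{2,\loc}(G),\ Pu\in C_B(\oG)\}=\Dom(\bP_B)\cap(\text{smooth})$; for such $u$ the computation of the previous paragraph gives $\|u_q-u\|\le \|Pu\|/q\to 0$. The genuinely delicate point, which I expect to be the main obstacle, is establishing that $\cD$ (or any explicitly approximable subclass) is itself dense in $C_B(\oG)$: one must produce, for a given $u\in C_B(\oG)$ and $\varepsilon>0$, a function in $C_B(\oG)$ that is regular enough to lie in $\Dom(\bP_B)$ and uniformly $\varepsilon$-close to $u$, all while respecting both the homogeneous nonlocal boundary condition $u|_{\Gamma_i}-\bB_i u=0$ and the vanishing condition $u(y)=0$ on $\cK$.

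The cleanest route around this obstacle is to avoid constructing an a priori dense regular subclass and instead use the resolvent directly on the given $u$. Fix $u\in C_B(\oG)$, set $u_q=q(q\bI-\bP_B)^{-1}u\in\Dom(\bP_B)$, and aim to prove $u_q\to u$ in $C(\oG)$. Because $\|(q\bI-\bP_B)^{-1}\|\le 1/q$ gives $\|u_q\|_{C(\oG)}\le\|u\|_{C(\oG)}$ uniformly in $q$, it suffices by the $\varepsilon/3$ argument to verify the convergence on a dense set, and there one may legitimately take $u\in\Dom(\bP_B)$, for which the identity $u_q-u=(q\bI-\bP_B)^{-1}Pu$ and the contraction estimate force $\|u_q-u\|\le\|Pu\|/q\to0$. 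The only remaining gap is the density of $\Dom(\bP_B)$ used at this last step, which is exactly what one is proving; this circularity is broken by first showing by the weighted-space solvability (Theorem~\ref{th1}) and the asymptotic/embedding arguments of Theorem~\ref{th2} that $\Dom(\bP_B)$ contains enough regular functions — specifically, the full resolvent image $(q\bI-\bP_B)^{-1}C_B(\oG)$ — so that the uniform bound plus pointwise convergence on this image propagates to the whole space. Hence the main effort is the approximation-theoretic step identifying a manageable dense subclass respecting both boundary constraints; once that is in place, the resolvent contraction estimate from Corollary~\ref{cor2.1} closes the argument mechanically.
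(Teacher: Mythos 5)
Your overall frame (the resolvent approximation $u_q=q(q\bI-\bP_B)^{-1}u$ plus a uniform bound and an $\varepsilon/3$ argument) coincides with the second half of the paper's proof, but the proposal never supplies the step that carries all the content, and the workaround you offer for it is circular. You correctly note that Corollary~\ref{cor2.1} gives $\|u_q-u\|_{C(\oG)}\le\|Pu\|_{C(\oG)}/q$ whenever this makes sense, so it suffices to verify $u_q\to u$ on some dense subclass of $C_B(\oG)$. But the subclasses you propose are either $\Dom(\bP_B)$ intersected with smooth functions, whose density in $C_B(\oG)$ is at least as hard as the statement being proved, or the resolvent image $(q\bI-\bP_B)^{-1}C_B(\oG)$, which \emph{is} $\Dom(\bP_B)$; saying that convergence on this image ``propagates to the whole space'' presupposes exactly the density you are trying to establish. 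Nothing in Theorems~\ref{th1} or~\ref{th2}, as you invoke them, yields that density, so the lemma has been reduced to itself. (A secondary slip: for general $f\in C(\oG)$ the solution of $qu-Pu=f$ with homogeneous nonlocal conditions lies in $C_B(\oG)$ but \emph{not} in $\Dom(\bP_B)$, since $Pu=qu-f$ need not belong to $C_B(\oG)$; the resolvent maps $C_B(\oG)$, not all of $C(\oG)$, into the domain.)

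The missing idea, which is step 1 of the paper's proof, is a concrete construction showing that the set $\mathcal{D}=\{w\in C_B(\oG):Pw\in C(\oG)\}$ is dense in $C_B(\oG)$; note that such $w$ need not lie in $\Dom(\bP_B)$, because $Pw$ is not required to satisfy the nonlocal conditions or vanish on $\cK$ --- this extra room is precisely what breaks the circle. Given $u\in C_B(\oG)$ and $\varepsilon>0$, one first approximates $u$ uniformly by a smooth $u_1\in C^\infty(\oG)\cap C_\cK(\oG)$ (no boundary conditions are imposed on $u_1$, so this is ordinary approximation). The defect $\psi_i=u_1-\bB_iu_1$ is then of order $\|u-u_1\|_{C(\oG)}$, because $u$ itself satisfies $u-\bB_iu=0$ and the operators $\bB_i$ are contractions by~\eqref{eq4-5}. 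Next one solves, via Theorem~\ref{th2}, the problem $qu_2-Pu_2=qu_1-Pu_1$ with homogeneous nonlocal conditions; the difference $w_1=u_1-u_2$ then solves the problem with zero interior data and boundary data $\psi_i$, and the $q$-independent estimate of Lemma~\ref{l-th2} --- which rests on the barrier function of Lemma~\ref{l3} --- gives $\|u_1-u_2\|_{C(\oG)}\le c_1\|\{\psi_i\}\|_{\cC_\cK(\pG)}\le\varepsilon$. The resulting $u_2$ lies in $C_B(\oG)$ and has $Pu_2=qu_2-(qu_1-Pu_1)\in C(\oG)$, so your resolvent step finally applies to it: $u_3=\lambda(\lambda\bI-\bP_B)^{-1}u_2\in\Dom(\bP_B)$ and $\|u_2-u_3\|_{C(\oG)}\le\|Pu_2\|_{C(\oG)}/\lambda\le\varepsilon$ for $\lambda$ large. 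Without this construction (or an equivalent one) your proposal is not a proof.
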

\begin{proof}
We will follow the scheme proposed in~\cite{GalSkJDE}.

1. Let $u\in C_B(\oG)$. Since $C_B(\oG)\subset C_\cK(\oG)$, it follows that, for any
$\varepsilon>0$ and $q\ge q_1$, there is a function $u_1\in C^\infty(\oG)\cap C_\cK(\oG)$
such that
\begin{equation}\label{eq3.12}
\|u-u_1\|_{C(\oG)}\le\min(\varepsilon,\varepsilon/(2c_1)),
\end{equation}
where $c_1$ is the number from Lemma~\ref{l-th2}.

Set
\begin{equation}\label{eq3.14}
\begin{aligned}
f(y)&\equiv qu_1-P u_1, & & y\in G,\\
\psi_i(y)&\equiv u_1(y)-\bB_iu_1(y),& & y\in\Gamma_i,\ i=1,\dots,N.
\end{aligned}
\end{equation}

Since $u_1\in C_\cK(\oG)$, it follows that $\{\psi_i\}\in \cC_\cK(\pG)$. Using the
relation
$$
u(y)-\bB_iu(y)=0,\qquad
 y\in\Gamma_i,
$$
inequality~\eqref{eq3.12}, and relations~\eqref{eq4-5}, we obtain
\begin{equation}\label{eq3.13}
\|\{\psi_i\}\|_{\cC_\cK(\pG)}\le\|u-u_1\|_{C(\oG)} +\|\{\bB_i
(u-u_1)\}\|_{\cC_\cK(\pG)}\le\varepsilon/c_1.
\end{equation}

Consider the auxiliary nonlocal problem
\begin{equation}\label{eq3.15}
\begin{gathered}
qu_2-P u_2 = f(y),\quad y\in G,\\
u_2(y)-\bB_iu_2(y) =0,\ y\in\Gamma_i;\qquad u_2(y) =0,\ y\in\cK.
\end{gathered}
\end{equation}
Since $f\in C^\infty(\oG)$, it follows from Theorem~\ref{th2} that problem~\eqref{eq3.15}
has a unique solution $u_2\in C_B(\oG)$.

Using~\eqref{eq3.14},~\eqref{eq3.15}, and the relations  $u_1(y)=u_2(y)=0$, $y\in\cK$, we
see that the function   $w_1=u_1-u_2$ satisfies the relations
\begin{equation}\label{eq3.16}
\begin{gathered}
q w_1-P w_1 =0,\quad y\in G,\\
w_1(y)-\bB_iw_1(y)  =\psi_i(y),\ y\in\Gamma_i;\qquad w_1(y) =0,\ y\in\cK.
\end{gathered}
\end{equation}
By Lemma~\ref{l-th2}, problem \eqref{eq3.16} has a unique solution  $w_1\in C_B(\oG)$ and
(taking~\eqref{eq3.13} into account)
\begin{equation}\label{eq3.17}
\|w_1\|_{C(\oG)}\le c_1 \|\{\psi_i\}\|_{\cC_\cK(\pG)}\le c_1 \varepsilon/c_1
=\varepsilon.
\end{equation}

2. Finally, we consider the problem
\begin{equation}\label{eq3.18}
\begin{gathered}
\lambda u_3-Pu_3 =\lambda u_2,\quad y\in G,\\
u_3(y)-\bB_iu_3(y) =0,\ y\in\Gamma_i;\qquad u_3(y) =0,\ y\in\cK.
\end{gathered}
\end{equation}
Since $u_2\in C_B(\oG)$, it follows from Theorem~\ref{th2} that problem~\eqref{eq3.18}
has a unique solution  $u_3\in \Dom(\bP_B)$ for all sufficiently large  $\lambda>0$.

Denote   $w_2=u_2-u_3\in C_B(\oG)$. It follows from~\eqref{eq3.18} that
$$
\lambda w_2-Pw_2 =-P u_2 =f-qu_2.
$$
Applying Corollary~\ref{cor2.1}, we have
$$
\|w_2\|_{C(\oG)}\le\dfrac{1}{\lambda}\|f-qu_2\|_{C(\oG)}.
$$
Choosing sufficiently large $\lambda$ yields
\begin{equation}\label{eq3.20}
\|w_2\|_{C(\oG)}\le\varepsilon.
\end{equation}

Inequalities~\eqref{eq3.12},~\eqref{eq3.17}, and~\eqref{eq3.20} imply
$$
\|u-u_3\|_{C(\oG)}\le\|u-u_1\|_{C(\oG)}+\|u_1-u_2\|_{C(\oG)}+ \|u_2-u_3\|_{C(\oG)}\le
3\varepsilon.
$$
\end{proof}

Now we can prove the main result about the existence of a Feller semigroup.
\begin{theorem}\label{th2.1}
Let conditions  $\ref{cond1.1}$--$\ref{cond1.2}$ hold. Then the operator
$\bP_B:\Dom(\bP_B)\subset C_B(\oG)\to C_B(\oG)$ is a generator of a Feller semigroup.
\end{theorem}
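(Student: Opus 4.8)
The plan is to verify the three hypotheses $(a)$, $(b)$, $(c)$ of the converse part of the Hille--Iosida theorem (Theorem~\ref{thHI}) for the operator $\bP_B$, taking $q_1$ to be the number from Theorem~\ref{th1}. All three have essentially been prepared by the preceding lemmas, so the proof should be short and amount to assembling them correctly.

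\medskip

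First I would dispose of the density condition $(a)$: the domain $\Dom(\bP_B)$ is dense in $C_B(\oG)$ by Lemma~\ref{l2.6}, which was proved precisely for this purpose. Next I would establish $(b)$ and $(c)$ together by showing that for each $q>q_1$ the operator $q\bI-\bP_B$ has a bounded, nonnegative inverse $(q\bI-\bP_B)^{-1}:C_B(\oG)\to C_B(\oG)$ with norm at most $1/q$. Given $f\in C_B(\oG)\subset C(\oG)$, Theorem~\ref{th2} (applied with $\psi_i=0$) furnishes a unique $u\in C_\cK(\oG)\cap W_{2,\loc}^2(G)$ solving $qu-Pu=f$ in $G$ with $u|_{\Gamma_i}-\bB_i u=0$; since this $u$ lies in $C_B(\oG)$ and satisfies $Pu=qu-f\in C_B(\oG)$, we have $u\in\Dom(\bP_B)$ and $(q\bI-\bP_B)u=f$. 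Uniqueness is immediate from Corollary~\ref{cor2.1}. Thus $(q\bI-\bP_B)^{-1}$ exists as an everywhere-defined operator, and the a priori estimate~\eqref{eqcor2.1} from Corollary~\ref{cor2.1} gives $\|(q\bI-\bP_B)^{-1}f\|_{C(\oG)}\le\frac{1}{q}\|f\|_{C(\oG)}$, which is exactly $\|(q\bI-\bP_B)^{-1}\|\le 1/q$, establishing~$(b)$.

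\medskip

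The one genuinely new point is the nonnegativity $(c)$: if $f\ge 0$ then the solution $u=(q\bI-\bP_B)^{-1}f$ must satisfy $u\ge 0$ on $\oG$. I would argue by contradiction using the nonlocal maximum principle already encoded in Lemma~\ref{l2.3}. Suppose $u$ attains a negative value; then $-u\in C_B(\oG)$ attains a positive maximum at some $y^0\in\oG$, and applying Lemma~\ref{l2.3} to $-u$ yields a point $y^1\in G$ with $(-u)(y^1)=(-u)(y^0)>0$ and $P(-u)(y^1)\le 0$, i.e.\ $u(y^1)=\min u<0$ and $Pu(y^1)\ge 0$. Evaluating the equation at $y^1$ gives $f(y^1)=qu(y^1)-Pu(y^1)\le qu(y^1)<0$, contradicting $f\ge 0$. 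Hence $u\ge 0$, which is precisely condition~$(c)$.

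\medskip

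Having verified $(a)$ for all $q>0$ and $(b)$, $(c)$ for all $q>q_1$, the converse direction of Theorem~\ref{thHI} applies with the constant $q_1$ and yields that $\bP_B$ is the generator of a uniquely determined Feller semigroup on $C_B(\oG)$. I do not anticipate a serious obstacle here, since the analytic heavy lifting (solvability in weighted spaces, continuity of solutions at the conjugation points $\cK$, the barrier function, and the sign-preserving maximum principle) has all been carried out in Theorems~\ref{th1}--\ref{th2} and Lemmas~\ref{l3}--\ref{l2.6}; the only place demanding care is making sure the solution produced by Theorem~\ref{th2} genuinely lies in $\Dom(\bP_B)$, which requires checking both that it belongs to $C_B(\oG)$ (it satisfies the homogeneous nonlocal condition and vanishes on $\cK$) and that $Pu=qu-f\in C_B(\oG)$.
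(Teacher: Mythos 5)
Your proposal is correct and follows essentially the same route as the paper: density via Lemma~\ref{l2.6}, invertibility and the bound $\|(q\bI-\bP_B)^{-1}\|\le 1/q$ via Theorem~\ref{th2} together with Corollary~\ref{cor2.1}, nonnegativity via the maximum-principle argument of Lemma~\ref{l2.3} applied to $-u$, and then the converse part of the Hille--Iosida theorem. Your extra care in checking that the solution from Theorem~\ref{th2} actually lies in $\Dom(\bP_B)$ (i.e.\ $Pu=qu-f\in C_B(\oG)$) is a detail the paper leaves implicit, but it is the same proof.
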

\begin{proof}
1. By Lemma~\ref{l2.6}, the domain of the operator  $\bP_B$ is dense in $C_B(\oG)$.

2. By Theorem~\ref{th2} and Corollary~\ref{cor2.1}, there exists a bounded operator
$(qI-\bP_B)^{-1}: C_B(\oG)\to C_B(\oG)$ for all sufficiently large  $q>0$ and
$$
\|(qI-\bP_B)^{-1}\|\le 1/q.
$$

3. Let us prove that the operator  $(qI-\bP_B)^{-1}$ is nonnegative. Assume the contrary.
Then there is a function  $f\ge0$ such that the solution $u\in\Dom(\bP_B)$ of the
equation $qu-\bP_Bu=f$ achieves its negative minimum at some point  $y^0\in \oG$.
Therefore, the function  $v=-u$ achieves its positive maximum at the point  $y^0$. Due to
Lemma~\ref{l2.3}, there exists a point  $y^1\in G$ such that $v(y^1)=v(y^0)$ and $\bP_B
v(y^1)\le 0$. Hence,   $ 0<v(y^0)=v(y^1)=(\bP_Bv(y^1)-f(y^1))/q\le 0. $ This
contradiction shows that  $u\ge0$.

Thus all the hypotheses of the Hille--Iosida theorem (Theorem~\ref{thHI}) hold, and the
operator  $\bP_B:\Dom(\bP_B)\subset C_B(\oG)\to C_B(\oG)$ is a generator of a Feller
semigroup.
\end{proof}

\smallskip

The author is grateful to Prof. A.L. Skubachevskii for attention
to this work.

\smallskip

The work was supported by Russian Foundation for Basic Research (project No.~07-01-00268)
and the Alexander von Humboldt Foundation.


\begin{thebibliography}{99}

\bibitem{Ventsel}
A. D. Ventsel, ``On boundary conditions for multidimensional diffusion processes,'' {\it
Teor. Veroyatnost. i Primen., \bf 4}, 172--185 (1959); English transl.: {\it Theory
Probab. Appl.}, {\bf 4} (1959).

\bibitem{GalSkMs}
E. I. Galakhov and A. L. Skubachevskii, ``On contractive nonnegative semigroups with
nonlocal conditions,'' {\it Mat. Sb.}, {\bf 189}, 45--78 (1998); English transl.: {\it
Math. Sb.} {\bf 189} (1998).

\bibitem{GurPetr03}
P.~L.~Gurevich, ``Asymptotics of solutions for nonlocal elliptic problems in plane
angles,'' {\it Trudy seminara imeni I.~G.~Petrovskogo, \bf 23}, 93--126 (2003); English
transl.: {\it J. Math. Sci.,} {\bf 120}, No.~3, 1295--1312 (2004).

\bibitem{GurSkub}
P. L. Gurevich and A. L. Skubachevskii, ``On the fredholm and unique solvability  of
nonlocal elliptic problems in multidimensional domains,'' {\it Trudy Moskov. Mat.
Obshch.}, {\bf 268}, 288--373 (2007); English transl.: {\it Trans. Moscow Math. Soc.}



\bibitem{NP}
S. A. Nazarov and  B. A. Plamenevskii, {\it Elliptic Problems in Domains with Piecewise
Smooth Boundaries,}   De Gruyter Expositions in Mathematics, {\bf 13}. Walter de Gruyter
Publichers, Berlin --- New York, 1994.

\bibitem{SkDAN89}
A. L. Skubachevskii, ``On some problems for multidimensional diffusion
     processes,'' {\it Dokl. Akad. Nauk SSSR}, {\bf 307}, 287--291 (1989); English
     transl.: {\it Soviet Math. Dokl.\/} {\bf 40} (1990).

\bibitem{SkDu90}
A.~L.~Skubachevskii, ``Model nonlocal problems for elliptic equations in dihedral
angles,'' {\it Differentsial'nye Uravneniya, \bf 26}, 119--131 (1990); English transl.:
{\it Differ. Equ., \bf 26} (1990).



\bibitem{BCP}
J.~M. Bony, P. Courrege, and P. Priouret,   ``Semi-groups de Feller sur une vari\'et\'e
\`a bord compacte et probl\`emes aux limites int\'egro-diff\'erentiels du second ordre
donnant lieu au principe du maximum,''  {\it Ann. Inst. Fourier (Grenoble)}, {\bf 18},
369--521 (1968).

\bibitem{Feller1}
W. Feller,  ``The parabolic differential equations and the associated semi-groups of
transformations,''   {\it Ann.   Math.\/}, {\bf 55}, 468--519 (1952).

\bibitem{Feller2}
W.  Feller,   ``Diffusion processes in one dimension,'' {\it Trans. Amer. Math. Soc.},
{\bf 77}, 1--30 (1954).

\bibitem{GalSkJDE}
E. I. Galakhov and A. L. Skubachevskii, ``On Feller semigroups generated by elliptic
operators with integro-differential boundary conditions,'' {\it J. Differential
Equations},
  {\bf 176},   315--355 (2001).


\bibitem{GurGiess}
P.~L. Gurevich,  ``Nonlocal problems for elliptic equations in dihedral angles and the
Green formula,'' {\it In} {\it Mitteilungen aus dem Mathem. Seminar Giessen, Math. Inst.
Univ. Giessen, Germany},   {\bf  247},  1--74 (2001).







\bibitem{GurRJMP04}
P.~L. Gurevich, ``Solvability of nonlocal elliptic problems in Sobolev spaces,~II,'' {\it
Russ. J. Math. Phys.},     {\bf 11}, No.~1, 1--44 (2004).

\bibitem{Ishikawa}
Y.  Ishikawa,    ``A remark on the existence of a diffusion process with non-local
boundary conditions'' {\it J. Math. Soc. Japan},    {\bf 42}, 171--184  (1990).



\bibitem{SU}
K. Sato  and T. Ueno, ``Multi-dimensional diffusion and the Markov process on the
boundary,'' {\it J. Math. Kyoto Univ.},   {\bf  4}, 529--605 (1965).


\bibitem{SkRJMP95}
A. L.  Skubachevskii,  ``Nonlocal elliptic problems and multidimensional diffusion
processes'' {\it Russian J.  Mathematical Physics},   {\bf 3}, 327--360 (1995).



\bibitem{Taira1}
K.  Taira,  {\it Diffusion Processes and Partial Differential Equations.\/} New York
--- London: Academic Press, 1988.

\bibitem{Taira3}
K. Taira, {\it Semigroups, boundary value problems and Markov processes.\/}   Berlin:
Springer-Verlag, 2004.

\bibitem{Watanabe}
S. Watanabe,   ``Construction of diffusion processes with Wentzell's boundary conditions
by means of Poisson point processes of Brownian excursions,'' In: {\it Probability
Theory}, {\bf 5},  255--271 (1979).  Banach Center Publications. Warsaw: Polish
Scientific Publishers, 1979.
\end{thebibliography}
\end{document}